\newtheorem{theorem}{Theorem}[section]
\newtheorem{lemma}[theorem]{Lemma}
\newtheorem{proposition}{Proposition}
\theoremstyle{definition}
\newtheorem{definition}[theorem]{Definition}
\newtheorem{remark}{Remark}
\newcommand{\dist}{\mathrm{dist}}
\def\LM#1{\hbox{\vrule width.2pt \vbox to#1pt{\vfill \hrule width#1pt height.2pt}}}
\def\LL{{\mathchoice {\>\LM7\>}{\>\LM7\>}{\,\LM5\,}{\,\LM{3.35}\,}}}
\def\restr{{\LL}}
\title[SIF for non-smooth fractures]
      {The Stress Intensity Factor for non-smooth fractures in antiplane elasticity.}
\author[Antonin Chambolle and Antoine Lemenant]{Antonin Chambolle
and
Antoine Lemenant}
\address{A. Chambolle: CMAP, Ecole Polytechnique, CNRS, France}
\email{antonin.chambolle@polytechnique.fr}
\address{A. Lemenant: LJLL, Universit\'e Paris-Diderot, CNRS, Paris, France}
\email{lemenant@ann.jussieu.fr}
\subjclass{Primary: 35J20 ; Secondary: 74R10.}
 \keywords{Elliptic problem in nonsmooth domain, blow-up limit, crack, singular set, brittle fracture.}
\newcommand{\Hh}{\mathcal{H}^1}
\newcommand{\R}{\mathbb R}
\newcommand{\N}{\mathbb N}
\newcommand{\e}{\varepsilon}
\newcommand{\dv} {\mathrm{div}}
\begin{document}






\begin{abstract}
Motivated by some questions arising in the study of quasistatic growth in brittle fracture, we investigate the asymptotic behavior of the energy of the solution $u$ of a Neumann problem near a crack in dimension 2. We consider non smooth cracks $K$ that are merely closed and connected.
At any point of density 1/2 in $K$, we show that the blow-up limit of $u$ is
the usual ``cracktip'' function $\sqrt{r}\sin(\theta/2)$, with a well-defined coefficient
(the ``stress intensity factor'' or SIF).
The method relies on Bonnet's monotonicity formula \cite{b} together with $\Gamma$-convergence techniques. 
\end{abstract}
\maketitle

\section{Introduction}

According to Griffith's theory, the propagation of a brittle fracture in an elastic body is governed by the competition between the energy spent to produce a crack, proportional to its length, and the corresponding release of bulk energy. An energetic formulation of this idea is the core of variational models for crack propagation,
which were introduced by Francfort and Marigo in \cite{fm}
and are based on a Mumford-Shah-type~\cite{MumfordShah}  functional.

In this work, we will restrict ourselves to the case of \emph{anti-plane shear},
where the domain is a cylinder $\Omega \times \R$, with $\Omega \subset \R^2$, which is linearly elastic with Lam\'e coefficients $\lambda$ and $\mu$. Moreover we assume the crack  to be   vertically invariant, while the displacement is vertical only.
Under those assumptions, the problem reduces to a purely 2D, scalar problem. Extending
our result to (truely 2D) planar elasticity requires a finer knowledge of 
monotonicity formulas for the bilaplacian and is still out of reach, it is
the subject of future study.

Given a loading process $g:t \mapsto g(t) \in H^1(\Omega)$,
and assuming that $K(t)\subset \Omega$ (a closed set) is the fracture at time $t$,
the bulk energy at the time $t_0$ is given by
\begin{equation}
E(t_0):=\min_u  
\int_{\Omega\setminus K(t_0)}(A\nabla u)\cdot \nabla u\;dx \,, \label{minE}
\end{equation}
where the minimum is taken among all functions $u \in H^1(\Omega \setminus K(t_0) ,\R)$ satisfying $u=g(t_0)$ on $\partial \Omega \setminus K(t_0)$, and the surface energy, for any fracture $K \supseteq K(t_0) $ is proportional to $\kappa\Hh(K)$, where $\Hh$ denotes the one dimensional Hausdorff measure and $\kappa$ is a constant which is known as the \textit{toughness} of the material.
Here the matrix $A$ which appears in the integral in~\eqref{minE} is
$(\mu/2)Id$, however in the paper we will also address the case of
more general matrices $A(x)$, which will be assumed to be uniformly elliptic
and spatially H\"older-continuous.

The proof of existence for a crack $K(t)$ satisfying the propagation criterions of brittle fracture as postulated by Francfort and Marigo in  \cite{fm}, was first proved by Dal Maso and Toader  \cite{dmt} in the simple  2D linearized anti-plane setting, then  extended in various directions by other authors \cite{cdens,dmft,fl,bg}.

In this paper we will freeze the ``time'' at a certain fixed value $t_0$,
and therefore do not really matter  exactly which model of existence we use.
We will only need to know that such fractures exist, as a main motivation
for our results. 

In the quasistatic model, the fracture $K(t)$ is in equilibrium at any time, which means that the total energy cannot be improved at time $t_0$ by extending the crack.
Precisely, for any closed set $K\supseteq K(t_0)$ such that $K(t_0)\cup K$ is connected, and for any  $u \in H^1(\Omega \setminus (K(t_0) \cup K))$ satisfying $u=g(t_0)$ on $\partial \Omega \setminus (K(t_0)\cup K)$, one must have that
\[
E(t_0)+\kappa\Hh(K(t_0)) \leq 
\int_{\Omega\setminus ( K(t_0)\cup K)}(A\nabla u)\cdot \nabla u\;dx + \kappa \Hh(K).
\]
This implies that  the propagation of the crack is totally dependent on the external force $g$, and a necessary condition for $K$ to propagate is that of  the first order limit of the bulk energy, namely
\begin{equation}
\limsup_{h\to 0^+}\frac{E(t_0+h)-E(t_0)}{h}, \label{enrelease}
\end{equation}
to be greater or equal to $\kappa$.
The limit in  \eqref{enrelease} can be interpreted as an \emph{energy release rate}
along the growing crack,
which is the central object of many recent works \cite{c,cfm,cfm1,K,lt}.  

In all the aforementioned papers, a strong regularity assumption  is made on the fracture $K(t)$:  it is assumed to be a segment near the tip in \cite{cfm,cfm1,K}; to our
knowledge the weakest assumption is the $C^{1,1}$ regularity in \cite{lt}.
The main reason for this is the precise knowledge of the asymptotic development of
the displacement $u$ near the tip of the crack, when it is straight.
Indeed the standard elliptic theory in polygonal domains (see e.g. Grisvard \cite{gris}) says that in a small ball $B(0,\varepsilon)$ (we assume the crack tip is the origin),
if $u$ denotes the minimizer for the problem \eqref{minE},
then there exists $\tilde u \in H^2(B(0,\varepsilon)\setminus K(t_0))$ such that
\begin{equation}
u = C\sqrt{r}\sin(\theta/2) + \tilde u, \label{dev}
\end{equation}
(in polar coordinates, assuming the crack is $\{\theta=\pm\pi\}$).
In fracture theory, the constant $C$ in front of the sinus is usually referred as the \emph{stress intensity factor} (SIF).
In \cite{lt}, G.~Lazzaroni and R.~Toader proved that
 \eqref{dev} is still true if $K(t_0)$ is a $C^{1,1}$ regular curve, up to a
change of coordinates, and they base their  study of the energy release rate upon 
this fact.

The main goal of this paper is to extend \eqref{dev} to fractures that are 
merely closed and connected sets, and asymptotic to a half-line at small scales.
(We will need the technical assumption that the Hausdorff density is  $1/2$ at the origin,
that is, the length in small balls is roughly the radius ---
which basically means that $K(t_0)$ admits a tangent, up to suitable rotations.)
Our main result is as follows:

\begin{theorem} \label{the} Assume that $K:=K(t_0)\subset \Omega \subset \R^2$
is closed and connected, and let $u$ be a solution for the minimizing problem in \eqref{minE} with some $\alpha$-h\"{o}lderian coefficients $A:\Omega \to \mathcal{S}^{2\times 2}$. Assume that $x_0\in K\cap \Omega$ is a point of density $1/2$, that is,
$$\limsup_{r \to 0}\frac{\mathcal{H}^1(K\cap B(x_0,r))}{2r}=\frac{1}{2}$$
and that $A(x_0)=Id$. Then the limit
\begin{equation}
\lim_{r\to 0}\frac{1}{r}\int_{B(x_0,r)\setminus K} (A \nabla u )\cdot \nabla u \; dx \label{limite}
\end{equation}
exists and is finite. Moreover denoting $C_0$ the value of this limit, considering $R_r$ a suitable family of rotations, and taking
$$g(r,\theta):=u(0)+\sqrt{\frac{2C_0}{\pi}} \sin(\theta/2),\quad (r,\theta) \in [0,1]\times [-\pi,\pi],$$
then the blow up sequence $u_r:=r^{-\frac{1}{2}}u(rR_r(x-x_0))$ converges to $g$ and $\nabla u_r$ converges to $\nabla g$ both strongly in $L^2(B(0,1))$ when $r \to 0$.
\end{theorem}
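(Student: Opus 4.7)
The plan is to decouple existence of the limit $C_0$ from identification of the blow-up: monotonicity handles the first, a $\Gamma$-convergence/blow-up argument the second. Normalize so that $x_0=0$, $u(0)=0$ and $A(0)=Id$, and set
$$E(r):=\frac{1}{r}\int_{B(0,r)\setminus K}(A\nabla u)\cdot\nabla u\,dx.$$
When $A\equiv Id$, Bonnet's formula \cite{b} asserts that $E$ is nondecreasing, the proof relying only on $u$ being harmonic with Neumann condition on $K$, on connectedness of $K$, and on a Poincar\'e-Wirtinger-type trace inequality on $\partial B(0,r)\setminus K$ (which itself uses the connectedness). For $\alpha$-H\"older $A$ the same computation yields a perturbed monotonicity of the form $E(r)\le E(\rho)+C\rho^{\alpha}$ for $0<r<\rho<r_0$, the extra term coming from controlling $A(x)-Id=O(|x|^\alpha)$ in the boundary contributions. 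This produces $C_0:=\lim_{r\to 0^+}E(r)\in[0,\infty)$.

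Next, define $K_r:=r^{-1}R_r^{-1}K$ and $u_r(x):=r^{-1/2}u(rR_r x)$. The hypothesis $\limsup \Hh(K\cap B(0,r))/2r=1/2$ together with the lower bound $\Hh(K\cap B(0,r))\ge r$ valid for any connected set meeting $\partial B(0,r)$ forces $\Hh(K\cap B(0,r))=r+o(r)$, so $\Hh(K_r\cap \overline{B(0,1)})\to 1$. A classical fact about connected continua of locally finite length with density $1/2$ at a point forces every Hausdorff-subsequential limit of the $K_r$ to be a radial half-line in $\overline{B(0,1)}$; a measurable selection of rotations $R_r$ (for instance, aligning with a best-fit direction for $K\cap B(0,r)$) makes every such limit equal to $L:=\{(t,0):-1\le t\le 0\}$. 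The energy bound from the previous step yields $\int_{B(0,1)\setminus K_r}|\nabla u_r|^2\,dx\le C_0+o(1)$, so, along a subsequence $r_n\to 0$, $u_{r_n}\rightharpoonup u_\infty$ weakly in $H^1_{loc}(B(0,1)\setminus L)$ and $K_{r_n}\to L$ in Hausdorff distance.

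The third step passes to the limit in the Neumann equation. The substantial ingredient is a Mosco-type convergence of the spaces $H^1(B(0,1)\setminus K_r)$ to $H^1(B(0,1)\setminus L)$: using Hausdorff convergence $K_r\to L$, connectedness of $K_r$, and $\Hh(K_r\cap B(0,1))\to\Hh(L)$, one extends any $H^1(B(0,1)\setminus L)$-function to an approximating sequence in $H^1(B(0,1)\setminus K_r)$ with convergent energy, and conversely shows that weak $H^1_{loc}$-limits on $B(0,1)\setminus L$ of bounded $H^1(B(0,1)\setminus K_r)$-functions lie in $H^1(B(0,1)\setminus L)$. This identifies $u_\infty$ as a harmonic function on $B(0,1)\setminus L$ with homogeneous Neumann condition on $L$. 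Writing the rescaled energy of $u_{r_n}$ on $B(0,\rho)$ as $E(\rho r_n)\to C_0$ for every $\rho\in(0,1]$, the rescaled energy of $u_\infty$ is constant equal to $C_0$ on $(0,1]$, and the equality case in Bonnet's monotonicity forces $u_\infty$ to be $1/2$-homogeneous. The only $1/2$-homogeneous harmonic functions on $\R^2\setminus L$ satisfying a Neumann condition on $L$ are the multiples of $\sqrt{r}\sin(\theta/2)$ (a direct ODE computation in polar coordinates), and matching $\|\nabla u_\infty\|_{L^2(B(0,1))}^2=C_0$ pins down $u_\infty(r,\theta)=\sqrt{2C_0/\pi}\,\sqrt{r}\sin(\theta/2)=g(r,\theta)$.

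Since the limit $g$ is uniquely determined, every subsequence has a further subsequence converging to the same $g$, so the full family $u_r$ converges to $g$. Strong $L^2$-convergence of the gradients follows from weak convergence together with convergence of norms, $\|\nabla u_r\|_{L^2}^2=E(r)+o(1)\to C_0=\|\nabla g\|_{L^2(B(0,1))}^2$. I expect the main obstacle to sit in the third step: the simultaneous Hausdorff convergence of the non-smooth cracks $K_r$, the measure-theoretic control $\Hh(K_r\cap B(0,1))\to\Hh(L)$, and passage to the limit in the Neumann condition must be welded into a single $\Gamma$-convergence statement for varying domains. Connectedness of $K_r$ and the fact that $L$ is a single half-line (a consequence of density $1/2$) are precisely what enable the construction of good recovery sequences, but exploiting them requires delicate extension arguments across non-smooth cracks.
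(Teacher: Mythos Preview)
Your outline is correct and follows essentially the same two-step strategy as the paper: a perturbed Bonnet monotonicity to get the limit $C_0$, then a blow-up argument identifying the cracktip function via the equality case in monotonicity. The geometric preliminaries (density $1/2$ plus connectedness $\Rightarrow$ $K_r\to L$ in Hausdorff distance after rotation) also match.

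The one place where your sketch diverges in presentation is the third step. You package the passage to the limit as Mosco convergence of $H^1(B\setminus K_r)$ to $H^1(B\setminus L)$; the paper instead does this by hand, building explicit competitors for $u_r$ from any $v\in H^1(B\setminus L)$ via a reflection across $K_r$, combined with a truncation $u_r^M$ and a cutoff near a carefully chosen radius $s$ (selected so that $\sharp(K_r\cap\partial B_s)=1$, $\int_{\partial B_s}\|\nabla u_r\|^2$ stays bounded, and a possible defect measure has no atom at $(-s,0)$). Both routes encode the same content; the paper's is more concrete, yours is cleaner to state but hides exactly the delicate extension-across-cracks work you flag at the end.

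One ordering issue to watch: you write that $E(\rho r_n)\to C_0$ implies $\frac{1}{\rho}\int_{B_\rho}|\nabla u_\infty|^2=C_0$, and only afterwards deduce strong convergence. Weak convergence alone gives $\le C_0$, not equality; the missing inequality is precisely what the Mosco/competitor argument supplies. The paper's logical order is: competitor construction $\Rightarrow$ $u_0$ is a Dirichlet minimizer $\Rightarrow$ $\limsup\|\nabla u_r\|^2\le\|\nabla u_0\|^2$ $\Rightarrow$ strong convergence $\Rightarrow$ constant normalized energy $\Rightarrow$ equality in Wirtinger $\Rightarrow$ cracktip. Your ingredients are the same, but you should rearrange so that strong convergence (or at least energy convergence, which follows from $\Gamma$-convergence once you know $u_r$ are minimizers and boundary traces are handled) precedes the claim that the limit has constant normalized energy.
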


If $A(x_0)\not = Id$ we obtain a similar statement by applying the change of variable $x\mapsto \sqrt{A(x_0)} x$ (see Theorem \ref{prop2}). We also stress that a rigourous sense to the value of $u(0)$ has to be given, and this will be done in Lemma \ref{defu0}. Besides, the exact definition of the rotations $R_r$ will be given in Remark \ref{rotations}.

Theorem \ref{the} is a first step toward understanding the energy release rate for non-smooth fractures, and study qualitative properties of the crack path.  
It provides also the existence of a generalized stress intensity factor, that we can define as being the limit in \eqref{limite}, and which always exists without any regularity assumptions on $K(t_0)$ of that of being closed and connected (see Proposition \ref{prop1bisA}). 

Our main motivation is the study of brittle fracture, but of course Theorem \ref{the} contains  a general result about the regularity of solutions for a Neumann Problem in rough domains, that could be interesting for other purpose.

The proof of Theorem \ref{the} will be done in two main steps, presented in Section \ref{Bonnet} and Section \ref{blowup}, which will come just after some preliminaries (Section \ref{geom}).
The first step is to prove the existence of limit in \eqref{limite}. For this we will use the monotonicity argument of Bonnet \cite{b}, which was used to prove existence of blow up limits for the minimizers of the Mumford-Shah functional.
 We will adapt here the argument to more general energies as the one with coefficient $A(x)$, and also with a second member $f$.    Notice that when $f=0$ we need only $K$ to be closed and connected, whereas when $f\not = 0$ we need furthermore that $K$ is of finite length.  

The second step is to prove the convergence strongly in $L^2$ of the blow-up limit $u_r:=r^{-\frac{1}{2}}u(rR_r(x-x_0))$ and its gradient, to the function $\sqrt{r}\sin(\theta/2)$. This is the purpose of Theorem \ref{prop2}, and the existence of limit in \eqref{limite} is the first step, because it implies that $\nabla u_r$ is bounded in $L^2(B(0,1))$ which helps us to extract subsequences.

Notice that Bonnet \cite{b} already had a kind of blow-up convergence for $u_r$, analogue to ours in his paper on regularity for Mumford-Shah minimizers. The main difference with the result of Bonnet, is that here the set $K$ is any given set whereas for Bonnet, $K$ was a minimizer for the Mumford-Shah functional, which allowed him to modify it at his convenience to create competitors and prove some results on $u$. Here we cannot argue by the same way  and this brings some interesting technical difficulties in the proof of convergence of $u_r$.

\section{Preliminaries}\label{geom}

Let $\Omega \subset \R^2$, $K\subset \Omega$ be a closed and connected set satisfying   $\Hh(K)<+\infty$ (here $\Hh$ denotes the one dimensional Hausdorff measure),  $f \in L^\infty(\Omega)$, $\lambda \geq 0$ and $g \in H^1(\Omega )\cap L^\infty(\Omega)$. If $K$ and $K'$ are two closed sets of $\R^2$ we will denote the Hausdorff distance by 
$$d_H(K,K'):=\max\left(\sup_{x\in K}\dist(x,K'), \sup_{x\in K'}\dist(x,K)\right).$$

We also consider some $\alpha$-H\"older regular coefficients $A : x\mapsto A(x) \in \mathcal{S}^{2\times 2}$, uniformly bounded and uniformly coercive (with constant $\gamma$). We will use the following series of notations 
$$\|X\|_A:= ^t\!XAX = (AX)\cdot X=\|\sqrt{A}X\|_{Id}=\|\sqrt{A}X\|.$$
For simplicity we will assume without loss of generality that 
 $\kappa=1$. We consider a slight more general energy than the one in \eqref{minE} with a second member $f$, namely
\begin{equation}
F(u):=\int_{\Omega \setminus K}\|\nabla u\|_A^2 dx + \frac{1}{\lambda}\int_{\Omega}|\lambda u-f|^2 .\label{defF}
\end{equation}
We will also allow the case $\lambda=0$ and then we ask also $f=0$ and $F$ is simply
\begin{equation*}
F(u):=\int_{\Omega \setminus K}\|\nabla u\|_A^2 dx .
\end{equation*}

We consider  a minimizer $u$ for $F$ among all functions $v \in H^1(\Omega \setminus K)$ such that $v= g$ on $\partial \Omega$, i.e.  $u$ is a weak solution for the problem
\begin{equation} \label{problem1}
\left\{\begin{array}{cc}
\lambda u -\dv A\nabla u = f &\text{ in } \Omega \setminus K\\
  (A \nabla u)\cdot  \nu = 0 &\text{ on } K\\
u= g &\text{ on } \partial \Omega
\end{array}\right.
\end{equation}
 It is well known that such a minimizer exists and is unique (up to additional constant if necessary in connected components of $\Omega\setminus K$ when eventually $f=0$), which provides a week solution for the problem \eqref{problem1}.

We begin with some elementary geometrical facts.

\begin{proposition} Let $K \subset \mathbb{R}^2$ be a closed and connected set such that 
\begin{equation}
\limsup_{r \to 0} \frac{\mathcal{H}^1(K\cap B(x_0,r)) }{2r} = \frac{1}{2}. \label{density_condition}
\end{equation}
For all $r>0$ small enough, let $x_r$ be any chosen point in $K\cap \partial B(x_0,r)$. Then we have that 
\begin{equation}
\lim_{r\to 0}\frac{1}{r}d_H(K\cap B(x_0,r),[x_r,x_0])=0. \label{wtangent}
\end{equation}
\end{proposition}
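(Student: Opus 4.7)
The plan is to run a blow-up / compactness argument. Set $x_0 = 0$ for simplicity and define, for each small $r>0$, the rescaled set $K_r := r^{-1}(K \cap \overline{B(0,r)}) \subset \overline{B(0,1)}$ and $y_r := x_r/r \in \partial B(0,1)$. The statement \eqref{wtangent} is equivalent to $d_H(K_r, [0,y_r]) \to 0$.

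The first key step is to upgrade the $\limsup$ density assumption to a genuine limit on the rescaled sets. Connectedness of $K$ together with $0, x_r \in K$ already gives a sharp lower bound: the orthogonal projection $P$ onto the line through $0$ and $x_r$ maps $K\cap \overline{B(0,r)}$ onto a connected subset of that line containing $0$ and $x_r$, hence onto a segment of length exactly $|x_r|=r$. Since $P$ is $1$-Lipschitz, this yields $\mathcal{H}^1(K\cap \overline{B(0,r)}) \geq r$, i.e. $\mathcal{H}^1(K_r) \geq 1$. Combined with the hypothesis $\limsup_{r\to 0}\mathcal{H}^1(K_r) = 1$, this forces $\mathcal{H}^1(K_r)\to 1$ for every sequence $r\to 0$.

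Next, given any sequence $r_n \to 0$, I would apply Blaschke's selection theorem and extract a subsequence (not relabelled) along which $K_{r_n}\to K_0$ in Hausdorff distance inside $\overline{B(0,1)}$ and $y_{r_n}\to y \in \partial B(0,1)$. The limit $K_0$ is closed, connected (Hausdorff limit of connected sets with a common point $0$), contains $0$ and $y$, and by Gołąb's semicontinuity theorem satisfies $\mathcal{H}^1(K_0) \leq \liminf \mathcal{H}^1(K_{r_n}) = 1$. The projection argument applied to $K_0$ gives the reverse inequality, so $\mathcal{H}^1(K_0)=1$. Rigidity of the equality case then yields $K_0=[0,y]$: projecting $K_0$ onto the line through $0$ and $y$, we have a $1$-Lipschitz map whose image contains $[0,y]$, hence
\[
1 = \mathcal{H}^1([0,y]) \leq \mathcal{H}^1(P(K_0)) \leq \mathcal{H}^1(K_0) = 1,
\]
so all inequalities are equalities. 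Parametrising $K_0$ by a Lipschitz arc from $0$ to $y$ of length $1$ (possible because any connected compactum of finite $\mathcal{H}^1$-measure is arcwise connected with rectifiable arcs), the equality in the projection bound forces the tangent of the arc to be parallel to $[0,y]$ almost everywhere, hence the arc equals $[0,y]$, and no further point can be added without violating $\mathcal{H}^1(K_0)=1$.

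Along the chosen subsequence we therefore have $d_H(K_{r_n},[0,y_{r_n}])\leq d_H(K_{r_n},[0,y]) + d_H([0,y],[0,y_{r_n}]) \to 0$. The usual ``every subsequence has a converging subsubsequence to the same limit $0$'' argument then promotes this to full convergence as $r\to 0$, which is exactly \eqref{wtangent}. The only genuinely delicate point is the rigidity step identifying $K_0$ with the segment from the measure/projection equality; the remaining ingredients (Blaschke, Gołąb, connectedness of Hausdorff limits) are standard tools that are by now classical in the Mumford–Shah literature.
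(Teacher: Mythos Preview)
Your blow-up/compactness route is quite different from the paper's proof, which is entirely elementary and direct: the paper picks an injective arc $\Gamma_r\subset K\cap\overline{B(x_0,r)}$ from $x_0$ to a first exit point $y_r\in\partial B(x_0,r)$, observes that $r\le\mathcal{H}^1(\Gamma_r)\le\mathcal{H}^1(K\cap B(x_0,r))=r+o(r)$, and then uses a bare-hands Pythagorean (isosceles-triangle) estimate to deduce $d_H(\Gamma_r,[x_0,y_r])=o(r)$; the remaining piece $K\cap B(x_0,r)\setminus\Gamma_r$ has length $o(r)$ and is argued to lie within $o(r)$ of $\Gamma_r$, and finally any other boundary point $x_r$ is shown to be $o(r)$-close to $y_r$. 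No Blaschke, no Go\l{}ab, no rigidity.

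Your argument, however, has a real gap: you repeatedly use that $K_r=r^{-1}(K\cap\overline{B(0,r)})$ is \emph{connected}, and this does not follow from the connectedness of $K$. Intersecting a connected set with a closed ball can disconnect it, and one can build examples satisfying the density-$1/2$ hypothesis for which $K\cap\overline{B(0,r_n)}$ has several components along a sequence $r_n\to 0$ (the extra components necessarily touch $\partial B(0,r_n)$). This breaks three of your steps at once: the projection $P(K_r)$ need not be an interval, so your lower-bound argument fails as written; Go\l{}ab's theorem does not apply to disconnected $K_{r_n}$ (think of finite sets converging to a segment); and the Hausdorff limit $K_0$ need not be connected. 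The repair is to pass to the doubled ball: the component $C_r$ of $K\cap\overline{B(0,2r)}$ containing $0$ is connected and reaches $\partial B(0,2r)$, hence $\mathcal{H}^1(C_r)\ge 2r$; the remaining components then have total length $o(r)$, hence diameter $o(r)$, and since each of them touches $\partial B(0,2r)$ they stay outside $\overline{B(0,r)}$ for small $r$. Thus $K\cap\overline{B(0,r)}\subset C_r$, and you may run Blaschke--Go\l{}ab--rigidity on the connected sets $r^{-1}C_r\subset\overline{B(0,2)}$ to obtain $r^{-1}C_r\to[0,2y']$, from which $r^{-1}(K\cap\overline{B(0,r)})=(r^{-1}C_r)\cap\overline{B(0,1)}\to[0,y']$ follows. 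So your strategy is salvageable, but the genuinely delicate point is this connectedness issue, not the rigidity step you flagged.
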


\begin{proof} Since $K$ is closed, connected and not reduced to one point (because of \eqref{density_condition}) we have  that $K \cap \partial B(x_0,r)$ is nonempty for all $r$ small enough. Moreover since $K$ is connected, there exists a simple connected curve $\Gamma_r \subset K$ that starts from $x_0$ and hits $\partial B(x_0,r)$ for the first time at some point $y_r \in K \cap \partial B(x_0,r)$. Since $\Gamma_r$ is connected we have that $\mathcal{H}^1(\Gamma) \geq \mathcal{H}^1([y_r,x_0]) =r$ and using \eqref{density_condition} we get $\mathcal{H}^1(\Gamma_r) \leq r+ o(r)$. From the last two inequalities, since $\Gamma_r$ is a connected curve, it is then very classical using Pythagoras inequality to prove that
\begin{equation}
d_H(\Gamma_r, [y_r,x_0])=o(r). \label{haha}
\end{equation}
Indeed, let $z$ be the point in $\Gamma_r$ of maximal distance to $[y_r,x_0]$, and let $h$ be this distance. Now let  $w$ be a point at distance $h$ to $[y_r,x_0]$, whose orthogonal projection onto $[y_r,x_0]$ is exactly the middle of $[y_r,x_0]$. Then the triangle $(y_r,x_0,w)$ is isocel, and in particular minimizes the perimeter among all triangle of same basis and same height. Therefore,
$$2\sqrt{(r/2)^2+h^2}=|w-x_r|+|w-x_0|\leq |z-x_r|+|z-x_0| \leq \Hh(\Gamma_r) \leq r+o(r) $$ 
which implies that $h= o(r)$ and proves \eqref{haha}.

Now \eqref{density_condition} also implies that
$$\mathcal{H}^1(K\cap B(x_0,r) \setminus \Gamma_r) = o(r),$$
from which we deduce that 
$$\sup\{\dist(x, \Gamma) ; \; x \in K \cap B(x_0,r) \} = o(r)$$
which implies $d_H(K \cap B(x_0,r) , [y_r,x_0])=o(r)$. Finally \eqref{wtangent} follows from the fact that $\dist(x_r,y_r)=o(r)$ for any other point $x_r \in K \cap \partial B(x_0,r)$.
\end{proof}

\begin{remark} The density condition \eqref{density_condition} does not imply the existence of tangent at the origin. One of such example can be found in \cite[Remark 2.7.]{cfm}, as being a curve with oscillating tangent at the origin: $\exp(-t^2)(\cos(t)\bold{e}_1+\cos(t)\bold{e}_2)$, $t\in [0,+\infty]$. A further example is given by some infinite spirals, that turns infinitely many times around the origin but has finite length, and even density $1/2$ at the origin (thus is arbitrary close to a segment). To construct such an example one can consider the curve $\gamma : t \mapsto t e^ {i \theta(t)} $ where $\theta(t) \in \mathbb{R}$ satisfies $\lim_{t \to 0}\theta(t)=+\infty$  and $\lim_{t \to 0} t \theta'(t)=0$ (uniformly) like for instance $\theta(t)=\sqrt{\ln(-t)}$. Then if $K:=\gamma([0,1])$ we have 
$$\mathcal{H}^1(K\cap B(0,r))=\int_0^r |\gamma'(t)|dt =\int_0^r \sqrt{1+t^2\theta'(t)^2}dt=r+o(r)$$
as desired. 
\end{remark}
\begin{remark}[Definition of $R_r$] \label{rotations} As noticed in the preceding remark, the existence of tangent, i.e. the existence of a limit for the sequence of rescaled set $\frac{1}{r}(K\cap B(x_0,r)-x_0)$, is not always guaranteed by the density condition. On the other hand if $R_r$ denotes for each $r>0$, the rotation that maps $x_r$ on the negative part of the first axis, then $R_r(\frac{1}{r}(K\cap B(x_0,r)-x_0))$ converges to a segment. In the sequel, $R_r$ will always refer to this rotation. 
\end{remark}
\begin{remark}\label{subseq} There exists some connected sets such that $\frac{1}{r_n}K\cap B(0,r_n)$ converges to some radius  in $B(0,1)$ for some sequence $r_n \to 0$, and such that $\frac{1}{t_n}K\cap B(0,t_n)$ converges to a diameter for another sequence $t_n\to 0$. Such a set can be constructed as follows. Take a sequence $q_n \to 0$ such that 
\begin{equation}
q_{n+1}/q_n\ \longrightarrow\ 0
 \label{geometric}
\end{equation}
The idea relies on the observation that thanks to \eqref{geometric}, while looking at the scale of size $q_n$, that is, in the ball $B(0,q_n)$, all the piece of set contained in $B(0,q_{n+1})$ is negligible in terms of Hausdorff distance. Therefore we can build two subsequences, one at the scales $q_{2n}$, and the other
one at the scales $q_{2n+1}$, that will not be seen by each other. 
\begin{figure}[h]
\vspace{-16mm}
\begin{center}
\input{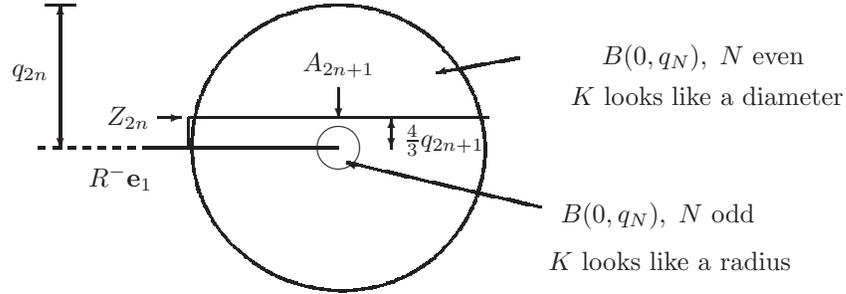}
\end{center}
\vspace{-8mm}
\caption{A crack tip with two different limits along different subsequences}\label{fig:strangetip}
\end{figure}

To do so, we consider the points $A_n:=(0,\frac{4}{3}q_{n})$ on the second axis of $\R^2$ and  we define $K$, as being $\R^-\times \{0\}$ union of all  horizontal diameters of $B(A_{2n+1},q_{2n})$, that are connected to the first axis by their left extremities. In other words,  denoting ${\bold e}_1=(1,0)$ and ${\bold e}_2=(0,1)$  the two unit canonical vectors of $\R^2$, 
$$K:= \R^-{\bold e}_1 \cup  \big(\bigcup_{n \in \N^*}\left(\R {\bold e}_1+A_{2n+1})\cap B(A_{2n+1},q_{2n} ) \right)\cup  \bigcup_{n\in \N^*} ([0,\frac{4}{3}q_{n+1}] {\bold e}_2 + Z_{2n}),$$
where $Z_{2n}$ is the left extremity point of the segment  ($\R e_1+A_{2n+1})\cap B(A_{2n+1},q_{2n} )$ (which is actually the horizontal diameter of $B(A_{2n+1},q_{2n})$), see Fig.~\ref{fig:strangetip}.

Then it is easy to see that, in the Hausdorff distance,
\begin{eqnarray}
 & & \frac{1}{q_{2n}}K\cap B(x,q_{2n}) \longrightarrow
 \R {\bold e}_1\cap B(0,1)\notag \\
  &\text{ and }&  \frac{1}{q_{2n+1}}(K\cap U(x,q_{2n+1}) \longrightarrow
 \R^- {\bold e}_1\cap B(0,1) 
\notag
\end{eqnarray}
as desired. 
\end{remark}
\begin{remark} Notice that a consequence of Theorem \ref{the} for the example  exhibited in  Remark \ref{subseq} is the following curious fact: even if $\frac{1}{r}K\cap B(x,r)$ has no limit when $r\to 0$, the limit of $\frac{1}{r}\int_{B(0,r)}\|\nabla u \|^2$ as $r\to 0$ exists thus has same value $C_0$ for any subsequences of $r$. Now, since $K$ has density $1/2$ along the odd subsequence  $r_n=q_{2n+1}$, applying the proof of Theorem \ref{the} for this subsequence we infer that  the limit of the blow up sequence $r_n^{-1/2}u(r_nx)$ converges to $\sqrt{2C_0r/\pi}\sin(\theta/2)$. But now regarding the limit in the even scales, $r_n=q_{2n}$, as $K$ is converging to a diameter, a similar proof as the one used to prove Theorem \ref{the} would imply that the blow up sequence is converging to the solution of a Neumann problem in a domain which is a ball, cut into two pieces by a diameter. This implies  $C_0=0$ (because of the decomposition of $u$ in spherical harmonics), so that actually returning to the   odd subsequence, for which  $K$ is converging to a radius,  we can conclude that $r_n^{-1/2}u(r_nx)$  must converge to $0$ as well.
\end{remark}

It is well known that any closed and connected set $K$ is arcwise connected, namely for any $x \not = y$  in  $K$ one can find an injective Lipschitz curve inside $K$ going from $x$ to $y$ (see e.g. \cite[Proposition 30.14]{d}). This allows us to talk about geodesic curve inside $K$, that connects $x$ to $y$, which stands to be the curve with that property which support has minimal length. 

\begin{definition} \label{chordarc}We say that $K$ is locally-chord-arc at $x_0$ if there exists a constant $C$ and a radius $r_0$ such that for every $r \leq r_0$  and for any couple of points $y$ and $z$ lying on $K \cap \partial B(x_0,r)$ the geodesic curve inside $K$ connecting $y$ and $z$ has length less than $Cr$.
\end{definition}

\begin{proposition} Let $K \subset \mathbb{R}^2$ be a closed and connected set satisfying the density condition
\begin{equation}
\limsup_{r \to 0} \frac{1}{2r}\mathcal{H}^1(K\cap B(x_0,r))  = \frac{1}{2}. \label{density_condition2}
\end{equation}
Then $K$ is locally-chord-arc at $x_0$.
\end{proposition}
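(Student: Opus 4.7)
The plan is to combine the density upper bound $\Hh(K\cap B(x_0,\rho))\le(1+\varepsilon)\rho$, valid for all sufficiently small $\rho$ by the limsup condition~\eqref{density_condition2}, with the connected-component structure of $K\cap \overline{B}(x_0,2r)$. Let $C_0$, $C_y$, $C_z$ be the connected components of $K\cap \overline{B}(x_0,2r)$ containing $x_0$, $y$ and $z$, respectively. The core step is to prove that $C_0=C_y=C_z$. Once this is granted, $C_0$ is a connected subset of $K$ containing both $y$ and $z$ with $\Hh(C_0)\le 2(1+\varepsilon)r$; since every connected set $E$ satisfies $\mathrm{diam}(E)\le \Hh(E)$ (by projection onto a line), the geodesic arc joining $y$ to $z$ inside $C_0\subset K$ has length at most $\Hh(C_0)$, giving the chord-arc bound with $C=3$ (say).

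To prove $C_y=C_0$, the decisive observation is that any component $C$ of $K\cap\overline{B}(x_0,2r)$ which is a proper subset of $K$ must meet $\partial B(x_0,2r)$. This follows from the arcwise connectedness of $K$: given $p\in K\setminus C$ and $q\in C$, an injective Lipschitz arc in $K$ from $q$ to $p$ cannot transit continuously between two disjoint relatively closed components of $K\cap\overline{B}(x_0,2r)$, so the supremum of the times at which it still lies in $C$ is attained at a point that also belongs to $\partial B(x_0,2r)$. Applied here: if $C_0\subsetneq K$, then $C_0$ contains $x_0$ and a point of $\partial B(x_0,2r)$, whence $\mathrm{diam}(C_0)\ge 2r$ and thus $\Hh(C_0)\ge 2r$. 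Similarly, if $C_y\ne C_0$, then $C_y$ contains $y\in\partial B(x_0,r)$ and meets $\partial B(x_0,2r)$, hence $\Hh(C_y)\ge r$. Disjointness would then force
\[
\Hh(K\cap\overline{B}(x_0,2r))\ge \Hh(C_0)+\Hh(C_y)\ge 3r,
\]
contradicting the density bound $\le 2(1+\varepsilon)r$ as soon as $\varepsilon<1/2$. Hence $C_y=C_0$, and the same argument yields $C_z=C_0$. The degenerate case $K\subset \overline{B}(x_0,2r)$ is trivial since then $C_0=C_y=C_z=K$.

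The main difficulty really lies in the topological step: components of $K\cap \overline{B}(x_0,2r)$ are not a priori open in the subspace topology (such a property would require local connectedness of $K$), so instead of a clopen decomposition one must argue via continuity of arcs, using that two distinct relatively closed components cannot be joined by a continuous path without first leaving $\overline{B}(x_0,2r)$. Once this topological input is in place, the rest of the proof is just the length-counting inequality described above.
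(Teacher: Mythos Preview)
Your argument is correct and reaches the same conclusion, but the route differs from the paper's. The paper works directly with the geodesic $\Gamma\subset K$ joining $y$ to $z$: it shows $\Gamma\subset B(x_0,3r)$ because otherwise $\Gamma$ would have to cross the annulus $B(x_0,3r)\setminus B(x_0,r)$ at least twice, forcing $\Hh(\Gamma\cap B(x_0,3r))\ge 4r$, which contradicts the density bound $\Hh(K\cap B(x_0,3r))\le (1+\tfrac{1}{10})\cdot 3r$. Once $\Gamma\subset B(x_0,3r)$, one immediately reads off $\Hh(\Gamma)\le \Hh(K\cap B(x_0,3r))\le 4r$.

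Your approach instead goes through the component structure of $K\cap\overline{B}(x_0,2r)$: you show that $x_0,y,z$ all lie in the same component $C_0$ by a disjoint-length counting argument, then bound the geodesic by $\Hh(C_0)$. This is slightly more structural and yields a marginally better constant ($C\approx 3$ versus $4$), at the cost of invoking the topological fact that proper components must touch $\partial B(x_0,2r)$ and, implicitly, that the compact connected set $C_0$ (having finite $\Hh$-measure) is arcwise connected. The paper's argument is shorter and sidesteps all component analysis by working with the geodesic from the outset. One small expository point: your parenthetical ``$\mathrm{diam}(E)\le\Hh(E)$'' is what justifies the lower bounds $\Hh(C_0)\ge 2r$ and $\Hh(C_y)\ge r$ in your second paragraph, not the geodesic bound itself; the latter follows from the arcwise connectedness of $C_0$ and the fact that any injective arc inside $C_0$ has length at most $\Hh(C_0)$.
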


\begin{proof} The density condition \eqref{density_condition2} together with  the fact that $K$ is closed and connected guarantees  that  $K$ is non reduced to one point, contains $x_0$, and that $\partial B(x_0,r)\cap K$ is nonempty for $r$ small enough. Let $r_0>0$ be one of this radius small enough such that moreover 
\begin{equation}
\mathcal{H}^1(K \cap B(x_0,r))\leq (1+\frac{1}{10})r \quad \forall r\leq 3r_0. \label{density1}
\end{equation}
Let now $y$ and $z$ be two points in $K \cap \partial B(x_0,r)$ for any $r\leq r_0$ and let $\Gamma \subset K$ be the geodesic curve connecting $y$ and $z$. Then $\Gamma$ is injective (by definition since it is a geodesic) and in addition we claim that $\Gamma \subset B(x_0,3r)$. Indeed, otherwise there would be a point $x \in \Gamma \setminus B(x_0,3r)$ which would imply $\mathcal{H}^1(\Gamma \cap B(x_0,3r))\geq 4r$ (because $y$ and $z$ are lying on $\partial B(x_0,r)$) and this contradicts \eqref{density1}.  But now that $\Gamma \subset B(x_0,3r)$, condition \eqref{density1} again implies that $\mathcal{H}^1(\Gamma)\leq \Hh(K\cap B(x_0,3r))\leq 4r$ which proves the proposition.
\end{proof}

In the sequel we will need to know that a minimizer of $F$ is bounded.
\begin{proposition}  Let $K$ be closed and connected, $u$ be a minimizer for the functional $F$ defined in \eqref{defF} with $f \in L^{\infty}(\Omega)$ and $\lambda > 0$. Then 
$$\|u\|_\infty \leq \frac{1}{\min(1,\lambda)}\max(\|f\|_\infty,\|g\|_\infty).$$
\end{proposition}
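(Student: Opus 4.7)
The standard approach is truncation, exploiting the strict convexity (hence uniqueness) of the minimizer for $F$. Set $M := \frac{1}{\min(1,\lambda)}\max(\|f\|_\infty,\|g\|_\infty)$ and define the truncated competitor
\[
u_M := \max(-M,\min(M,u)).
\]
The plan is to show that $u_M$ is an admissible competitor with $F(u_M)\le F(u)$, whence $u_M=u$ by uniqueness of the minimizer for the strictly convex functional $F$.

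\textbf{Admissibility.} Since $\min(1,\lambda)\le 1$, one has $\|g\|_\infty\le M$, so $g=\max(-M,\min(M,g))$ on $\partial\Omega$, hence $u_M=g$ on $\partial\Omega$. Moreover truncation preserves $H^1(\Omega\setminus K)$.

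\textbf{Comparison of energies.} On $\{|u|\le M\}$ we have $u_M=u$, so nothing to check. On $\{u>M\}$, $\nabla u_M=0$ a.e., so trivially $\|\nabla u_M\|_A^2\le \|\nabla u\|_A^2$; similarly on $\{u<-M\}$. Thus the Dirichlet term does not increase. For the second term, the key is that by the choice of $M$,
\[
\lambda M \;\ge\; \tfrac{\lambda}{\min(1,\lambda)}\|f\|_\infty \;\ge\; \|f\|_\infty,
\]
(in both cases $\lambda\le 1$ and $\lambda>1$). Therefore on $\{u>M\}$ one has $\lambda u - f > \lambda M - f \ge 0$, so squaring preserves the inequality and $(\lambda u_M - f)^2=(\lambda M-f)^2\le(\lambda u - f)^2$; symmetrically on $\{u<-M\}$. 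Integrating gives $\int_\Omega|\lambda u_M-f|^2\le \int_\Omega|\lambda u-f|^2$. Combining the two, $F(u_M)\le F(u)$.

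\textbf{Conclusion.} Since $u$ is the unique minimizer of $F$ subject to the Dirichlet condition $u=g$ on $\partial\Omega$ (strict convexity comes from the positivity of $\lambda$ which eliminates the constant ambiguity), one must have $u=u_M$ a.e., i.e.\ $\|u\|_\infty\le M$. The only subtle point is verifying $\lambda M\ge\|f\|_\infty$ for both regimes $\lambda\le 1$ and $\lambda>1$, which is exactly what the $\min(1,\lambda)$ normalization in the definition of $M$ achieves; this is why the bound is stated in that form rather than as a pure $\max(\|f\|_\infty,\|g\|_\infty)$.
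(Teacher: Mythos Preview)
Your proof is correct and follows exactly the same truncation argument as the paper: define $M$ as stated, show that the truncated function $u_M=\max(-M,\min(u,M))$ is an admissible competitor with no larger energy, and conclude by uniqueness of the minimizer. The paper's proof is a one-line sketch of this; you have simply filled in the verification that $\lambda M\ge\|f\|_\infty$ and $M\ge\|g\|_\infty$, which is precisely what the choice of $M$ is designed to ensure.
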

\begin{proof} It suffice to fix $M:=(\min(1,\lambda))^{-1}\max(\|f\|_\infty,\|g\|_\infty)$ and notice that the function 
$$w:=\max(-M,\min(u,M))$$
is a competitor for $u$ and has less energy. By uniqueness of the minimizer we deduce that $u=w$. 
\end{proof}

\section{Bonnet's monotonicity Lemma and variants}
\label{Bonnet} In this section we prove the existence of the limit 
$$\lim_{r \to 0}\frac{1}{r}\int_{\sqrt{A(0)}B(x,r)}\|\nabla u \|_A^2 dx,$$
for any $x \in \Omega$ when $u$ is a minimizer of $F$. Of course when $x \in \Omega \setminus K$ this is clear by the interior regularity of solution for the Problem \ref{problem1}, and the value of the limit in this case is zero. Therefore it is enough to consider a point $x \in K$.

The case of harmonic functions is slightly simpler than the general case, and need no further assumptions on $K$ than being just closed and connected. This direclty comes from \cite{b} and \cite{d} but we will recall the proof in Subsection \ref{harmonic}, that follows a lot the approach of G. David \cite[Section 47]{d}. Then we will consider the case of a non zero second member $f$ but still with the classical Laplace operator, and finally in a third section we will adapt all the proofs to more general second order operator of divergence form.

We begin with some technical tools.

\subsection{Technical tools}

We will need the following 2 versions of the Gauss-Green formula.

\begin{lemma}[Integration by parts, first version]\label{ipp} Let $K$ be closed and connected, $u$ be a minimizer for the functional $F$ defined in \eqref{defF}. Then for any $x \in \Omega$ and for a.e.  $r$ such that $B(x,r)\subset \Omega$ it holds
$$\int_{B(x,r)\setminus K}\|\nabla u \|_A\; dy = \int_{B(x,r)\setminus K}(f-\lambda u)u \;dy + \int_{\partial B(x,r)\setminus K} u (A\nabla u)\cdot \nu \;d\Hh.$$
\end{lemma}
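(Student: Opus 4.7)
The plan is to apply the weak formulation of the Neumann problem \eqref{problem1} to a carefully chosen test function of the form $u\varphi_\epsilon$, where $\varphi_\epsilon$ is a radial cutoff that approximates the characteristic function of $B(x,r)$, and then to let $\epsilon\to 0$.

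First I would fix $x\in\Omega$ and $r>0$ such that $B(x,r)\subset\subset\Omega$, and pick $\psi_\epsilon\in C^\infty([0,\infty),[0,1])$ with $\psi_\epsilon\equiv 1$ on $[0,r-\epsilon]$, $\psi_\epsilon\equiv 0$ on $[r,\infty)$, and $\psi_\epsilon'\le 0$. Setting $\varphi_\epsilon(y):=\psi_\epsilon(|y-x|)$, I would observe that because $u\in H^1(\Omega\setminus K)\cap L^\infty$ (using the boundedness proposition just above the lemma, or trivially when $\lambda=0$), the product $u\varphi_\epsilon$ lies in $H^1(\Omega\setminus K)$, has compact support inside $\Omega$, and therefore agrees with $g$ on $\partial\Omega$ in the sense of traces. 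It is thus an admissible test function for the weak formulation of \eqref{problem1}.

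Plugging $u\varphi_\epsilon$ into the weak formulation and using the product rule yields
\begin{equation*}
\int_{\Omega\setminus K}(A\nabla u)\cdot\nabla u\,\varphi_\epsilon\,dy + \int_{\Omega\setminus K} u\,(A\nabla u)\cdot\nabla\varphi_\epsilon\,dy + \int_{\Omega\setminus K}\lambda u^2\varphi_\epsilon\,dy = \int_{\Omega\setminus K} fu\,\varphi_\epsilon\,dy.
\end{equation*}
As $\epsilon\to 0$, dominated convergence takes the three ``bulk'' terms to their analogues with $\chi_{B(x,r)}$ replacing $\varphi_\epsilon$. For the remaining term, I would use the coarea formula to write
\begin{equation*}
\int_{\Omega\setminus K} u\,(A\nabla u)\cdot\nabla\varphi_\epsilon\,dy = \int_0^\infty \psi_\epsilon'(\rho)\,h(\rho)\,d\rho,\qquad h(\rho):=\int_{\partial B(x,\rho)\setminus K} u\,(A\nabla u)\cdot\nu\,d\mathcal{H}^1,
\end{equation*}
since $\nabla\varphi_\epsilon(y)=\psi_\epsilon'(|y-x|)\,\nu_y$ with $\nu_y=(y-x)/|y-x|$. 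Fubini's theorem applied to $u\,(A\nabla u)\in L^1_{\mathrm{loc}}(\Omega\setminus K)$ shows that $h\in L^1_{\mathrm{loc}}(0,\mathrm{dist}(x,\partial\Omega))$. Choosing $-\psi_\epsilon'$ to be an approximation of the Dirac mass at $r$, the integral converges to $-h(r)$ at every Lebesgue point $r$ of $h$, hence for a.e.\ $r$. Rearranging gives exactly the claimed identity.

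The main (and only) delicate point is giving meaning to the trace $(A\nabla u)\cdot\nu$ on $\partial B(x,r)\setminus K$, which a priori could be spoiled by the complicated intersection of $K$ with the circle; this is precisely why the identity is asserted only for a.e.\ $r$, and Fubini provides the needed justification. Note that the Neumann condition $(A\nabla u)\cdot\nu=0$ on $K$ never explicitly appears in the calculation: it is already encoded in the choice of test space $H^1(\Omega\setminus K)$ and the weak formulation of \eqref{problem1}, so the lack of any regularity of $K$ is harmless.
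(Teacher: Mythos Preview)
Your argument is essentially the same as the paper's: both test the weak Euler--Lagrange equation against $u$ times a radial cutoff approximating $\mathbf{1}_{B(x,r)}$, pass to the limit in the bulk terms by dominated convergence, and handle the gradient-of-cutoff term via Lebesgue differentiation of the $L^1$ function $\rho\mapsto\int_{\partial B(x,\rho)\setminus K}u\,(A\nabla u)\cdot\nu\,d\mathcal H^1$. The only cosmetic differences are that the paper uses a piecewise-linear cutoff centered at $r$ while you use a smooth one supported in $[r-\epsilon,r]$, and that you invoke coarea/Fubini explicitly where the paper just names the Lebesgue differentiation theorem. One small slip: the test function $u\varphi_\epsilon$ should \emph{vanish} on $\partial\Omega$ (not ``agree with $g$''), which it does since it is compactly supported; and you do not actually need $u\in L^\infty$ for $u\varphi_\epsilon\in H^1(\Omega\setminus K)$, only $u\in H^1$ and $\varphi_\epsilon$ Lipschitz and bounded.
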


\begin{proof} If $u$ is a minimizer, then comparing the energy of $u$ with the one of $u+t \varphi$ and using a standard variational argument yields that
\begin{equation}
\int_{\Omega \setminus K} (A \nabla u ) \cdot \nabla \varphi \,dx
 = \int_{\Omega} (\lambda u-f)\varphi\,dx
\label{ipp2}
\end{equation}
must hold  for any  function $\varphi \in H^1(\Omega \setminus K)$ compactly supported inside $\Omega$. Let us choose $\varphi$ to be equal to $\psi_\varepsilon u(x)$, where $\psi_\varepsilon(x) =g_\varepsilon(\|x\|)$ is radial, and $g_\varepsilon$ is equal to $1$ on $[0,(1-\varepsilon)r]$, equal to $0$ on $[r+\varepsilon,+\infty[$ and linear on $[(1-\varepsilon)r,(1+\varepsilon)r]$. Applying \eqref{ipp2} with $\varphi= \psi_\varepsilon u$ gives
\begin{equation}
\int_{\Omega \setminus K} ( A \nabla u ) \cdot  u\nabla \psi_\varepsilon   +
\int_{\Omega \setminus K} (A \nabla u ) \cdot   \psi_\varepsilon \nabla u = \int_{\Omega} (\lambda u-f)\psi_\varepsilon u.\label{conv}
\end{equation}
It is clear that $\psi_\varepsilon$ converges to ${\bf 1}_{B(x,r)}$ strongly in $L^2(\Omega)$, which gives the desired convergence for the second term and last term in \eqref{conv}. Now for the first term, we notice that $\psi_\varepsilon$ is Lipschitz and its derivative is equal a.e. to $\frac{x}{2\varepsilon \|x\|} {\bf 1}_{B(x,r+\varepsilon)\setminus B(x,(1-\varepsilon)r)} $ so that
\begin{eqnarray}
\int_{\Omega \setminus K} ( A \nabla u ) \cdot  u\nabla \psi_\varepsilon   &=& 
\frac{1}{2\varepsilon}\int_{(B(x,r+\varepsilon)\setminus B(x,(1-\varepsilon)r))\setminus K} (A \nabla u) \cdot  u \frac{x}{\|x\|}   \notag
\end{eqnarray}
which converges to $\int_{\partial B(x,r)\setminus K} (A \nabla u)\cdot  u \;\nu\;   d\Hh$ for a.e. $r$ by Lebesgue's differentiation theorem applied to the $L^1$ function $r \mapsto \int_{\partial B(x,r) \setminus K} ( A \nabla u )\cdot   \nu  \; d\Hh$.
\end{proof}

The first part of the next Lemma comes from a  topological  argument in \cite{d} (see page 299).

\begin{lemma}[Integration by parts, second version]\label{ippV2} Let $K\subset \Omega$ be closed and connected, $x\in K$ and $r_0>0$ be such that $B(x,r_0)\subset \Omega$.  For all $r \in (0,r_0)$ we decompose 
$\partial B(x,r) \setminus K= \bigcup_{j \in J(r)} I_j(r)$
where $I_j(r)$ are disjoints arcs of circles. Then for each $j \in J(r)$ there exists a connected component $U_j(r)$ of $\Omega \setminus (I_j(r)\cup K)$ such that 
$$\partial U_j(r) \setminus K = I_j(r).$$
Moreover if $u$ is a minimizer for the functional $F$ defined in \eqref{defF}, then for a.e. $r \in (0,r_0)$ and for every $j \in I_j(r)$ we have 
\begin{equation}
\int_{I_j(r)}  (A\nabla u) \cdot \nu \; d\Hh = \int_{U_j} (\lambda u - f) dx,  \label{ipp_formule}
\end{equation}
where $\nu$ is the inward normal vector  in $U_j$, i.e. pointing inside $U_j$. 
\end{lemma}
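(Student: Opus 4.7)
The lemma consists of two parts: a topological assertion (existence of a component $U_j$ with $\partial U_j \setminus K = I_j$) and an integration-by-parts identity on $U_j$. I would prove them in this order.

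\textbf{Topological part.} The two endpoints of each open arc $I_j$ lie on $K \cap \partial B(x,r)$. Since $K$ is closed and connected (hence arcwise connected, as recalled before Definition~\ref{chordarc}), these endpoints can be joined by a Jordan arc inside $K$, and the concatenation with $I_j$ forms a simple closed loop in $\Omega$. By Jordan's theorem this loop bounds a planar region, and I take $U_j$ to be a connected component of $\Omega \setminus (K \cup I_j)$ lying inside; this is essentially the argument of \cite[Section~47]{d}. The equality $\partial U_j \setminus K = I_j$ then follows by contradiction: if another arc $I_k \subset \partial B(x,r) \setminus K$ with $k \neq j$ touched $\partial U_j$, one could pass from $U_j$ across $I_k$ into a different component of $\Omega \setminus (K \cup I_j)$ without crossing $K \cup I_j$, contradicting the definition of connected component.

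\textbf{Integration by parts.} For the identity \eqref{ipp_formule}, I would test the weak Euler--Lagrange equation \eqref{ipp2} against a family $\varphi_\varepsilon \in H^1(\Omega \setminus K)$, compactly supported in $\Omega$, approximating $\mathbf{1}_{U_j}$. Let $\eta_\varepsilon$ be a smooth radial cutoff equal to $1$ on $\{|y-x| \leq r - \varepsilon\}$ and vanishing on $\{|y-x| \geq r + \varepsilon\}$, and let $\rho_\varepsilon$ be a cutoff vanishing within an $\varepsilon$-neighborhood of the two endpoints of $I_j$. Define $\varphi_\varepsilon := \eta_\varepsilon \rho_\varepsilon$ on the connected component of $(\Omega \setminus K)$ containing $U_j$ inside a thin tubular neighborhood of $\overline{U_j}$, and extend by $0$ elsewhere. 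Plugging into \eqref{ipp2}, the right-hand side $\int_\Omega (\lambda u - f)\varphi_\varepsilon$ converges to $\int_{U_j}(\lambda u - f)\,dx$ by dominated convergence. The left-hand side splits into a radial contribution (from $\nabla \eta_\varepsilon$) localized in a thin strip across $I_j$, and an angular contribution (from $\nabla \rho_\varepsilon$) near the endpoints. For a.e.\ $r$, the radial part converges to $\int_{I_j} (A\nabla u)\cdot \nu\,d\Hh$ by Lebesgue differentiation applied to the $L^1_{loc}$ function $s \mapsto \int_{\partial B(x,s)\setminus K}(A\nabla u)\cdot \hat e_r\,d\Hh$, exactly as in the proof of Lemma~\ref{ipp}.

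\textbf{Main obstacle.} The principal difficulty is the behavior of $\varphi_\varepsilon$ near the endpoints of $I_j$, where $\partial U_j$ transitions from the smooth arc $I_j$ into the possibly rough set $K$ and where several distinct components of $\Omega \setminus K$ may accumulate. The cutoff $\rho_\varepsilon$ is needed so that $\varphi_\varepsilon$ remains genuinely supported only on a neighborhood of $U_j$ (and not bleeding into neighbouring components through the $K$-part of $\partial U_j$); the price is an extra gradient of magnitude $\sim 1/\varepsilon$ supported on a set of area $O(\varepsilon^2)$. By Cauchy--Schwarz, the corresponding contribution is bounded by a constant times $\|A\nabla u\|_{L^2(N_\varepsilon)}$, where $N_\varepsilon$ is a shrinking neighbourhood of the endpoints; this tends to $0$ by absolute continuity of the Lebesgue integral. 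Combined with the boundedness $\|u\|_\infty<\infty$ from the preceding proposition, this yields \eqref{ipp_formule} for a.e.\ $r \in (0, r_0)$, with the almost-everywhere restriction coming precisely from the slicing step.
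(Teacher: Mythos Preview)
Your two-step strategy (topological claim, then test the weak equation~\eqref{ipp2} against a cutoff approximating $\mathbf{1}_{U_j}$) matches the paper's, and your Jordan-curve sketch for the topological part is essentially the argument from~\cite{d} that the paper simply cites. The difference is in the cutoff. The paper uses \emph{only} a radial function: its test function is $\varphi_\varepsilon=\mathbf{1}_{\hat U_j}\psi_\varepsilon$ (respectively $\mathbf{1}_{\hat U_j}(1-\psi_\varepsilon)$ when $U_j$ lies outside $B(x,r)$), where $\hat U_j$ is the full connected component of $\Omega\setminus K$ containing $U_j$ and $\psi_\varepsilon$ is the same radial cutoff as in Lemma~\ref{ipp}. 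Since $\hat U_j$ is already a component of $\Omega\setminus K$, this product is automatically in $H^1(\Omega\setminus K)$ with compact support, and the topological identity $\partial U_j\setminus K=I_j$ says precisely that within $\hat U_j$ the only interface between $U_j$ and its complement is $I_j$, which the radial cutoff handles. The ``bleeding into neighbouring components through the $K$-part of $\partial U_j$'' that you flag as the main obstacle therefore does not occur, and your angular cutoff $\rho_\varepsilon$ with its endpoint error estimate is an unnecessary detour. Your construction is not wrong in spirit, but the phrase ``inside a thin tubular neighborhood of $\overline{U_j}$'' is imprecise (such a neighborhood can meet $\hat U_j$ on the far side of $K$, reintroducing a jump), and you omit the case $U_j\subset\Omega\setminus B(x,r)$, which the paper handles by switching to $1-\psi_\varepsilon$.
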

\begin{proof} By assumption $K$ is closed, so that $\partial B(0,r)\setminus K$ is a relatively open set in $\partial B(0,r)$ which is one dimensional. Therefore we can decompose 
$\partial B(0,r)\setminus K$ as a union of  arc of circles as in the statement of the Lemma, namely 
$$\partial B(0,r)\setminus K = \bigcup_{j \in J} I_j.$$
(we avoid the dependance in $r$ to lighten the notations). Let  us denote by $U^+_j$  the connected component of $\Omega \setminus (K \cup I_j)$ containing the points of $B(0,r)\setminus K$ very close to $I_j$, and similarly $U^-_j$ is the one containing the points of $\Omega \setminus (K\cup B(0,r))$ very close to $I_j$. Then there is one between $U^\pm_j$, that we will denote by $U_j$, which satisfies
\begin{equation}
\partial U_j \setminus K = I_j. \label{topolog}
\end{equation}
The proof of  \eqref{topolog} relies on the connectedness of $K$  (see \cite{d} page 299 and 300 for details:  in our case the connectedness of $K$ implies the topological assumption denoted by  (8) in \cite{d} that is used to prove \eqref{topolog} (which is actually (14) in \cite{d})).

Then we want to prove \eqref{ipp_formule} by an argument similar to Lemma \ref{ipp} applied in $U_j$. For this purpose we consider as before a radial function but we need to separate two cases: if $U_j\subset B(0,r)$ then we take the same function $\psi_\varepsilon(x) =g_\varepsilon(\|x\|)$ where $g_\varepsilon$ is equal to $1$ on $[0,(1-\varepsilon)r]$, equal to $0$ on $[r+\varepsilon,+\infty[$ and linear on $[(1-\varepsilon)r,(1+\varepsilon)r]$. Now if $U_j \subset \Omega \setminus B(0,r)$ we define $\psi_\varepsilon := 1- \psi_\varepsilon$.

Then we take as a competitor for $u$ the function   $\varphi = {\bf 1}_{\hat U_j}\psi_\varepsilon $, where  $\hat U_j$ is the connected component of $\Omega \setminus K$ containing $U_j$. Notice that this is an admissible choice, namely $\varphi \in H^{1}(\Omega \setminus K)$ and $\varphi=0$ on $\partial \Omega$.

Applying \eqref{ipp} with $\varphi= {\bf 1}_{\hat U_j}\psi_\varepsilon $ gives
\begin{equation}
\int_{\hat U_j} ( A \nabla u ) \cdot  \nabla \psi_\varepsilon  = \int_{\hat U_j} (\lambda u-f)\psi_\varepsilon . \label{formipp}
\end{equation}
As in the proof of Lemma \ref{ipp}, it is clear that $\psi_\varepsilon$ converges to ${\bf 1}_{U_j}$ strongly in $L^2(\Omega)$, which gives the desired convergence for the right hand side term  in \eqref{formipp}. Now for the left hand side term, we use as before that $\psi_\varepsilon$ is Lipschitz and its derivative is equal a.e. to $\pm \frac{x}{2\varepsilon \|x\|} {\bf 1}_{B(x,r+\varepsilon)\setminus B(x,(1-\varepsilon)r)} $ (with the correct sign depending on which side of $I_j$ lies $U_j$) so that
\begin{eqnarray}
\int_{\Omega \setminus K} (A \nabla u)\cdot   \nabla \psi_\varepsilon  &=& 
\pm \frac{1}{2\varepsilon}\int_{\hat U_j \cap (B(x,r+\varepsilon)\setminus B(x,(1-\varepsilon)r))} ( A \nabla u)\cdot   \frac{x}{\|x\|}   \notag
\end{eqnarray}
which converges to $\int_{I_j} (A \nabla u )\cdot  \nu   d\Hh$ for a.e. $r$ by Lebesgue's differentiation theorem applied to the $L^1$ function $r \mapsto \int_{\partial B(x,r) \cap \hat U_j} ( A \nabla u )\cdot   \nu  \; d\Hh$.
\end{proof}


\subsection{Monotonicity for harmonic functions}
\label{harmonic}
So we arrive now to the first monotonicity result. The following proposition is one of the key points in Bonnet's proof of the classification of global minimizers for the Mumford-Shah functional \cite{b} (see also Section 47 of Guy David's book \cite{d} for a more detailled proof with slightly weaker assumptions than \cite{b}). The same argument was also used in  Lemma 2.2. of \cite{L1} to prove a monotonicity result for the energy of a harmonic function in the complement of minimal cones in $\R^3$, but the rate of decay obtained by this method is sharp only in dimension 2. Notice also that a similar argument with the elastic energy (i.e. $L^2$ norm of the symmetric gradient) of a vectorial function $u : \Omega \to \R^2$ seems not to be working. Notice that in \cite{d} the assumption $\Hh(K)<\infty$ is needed whereas $K$ is not necessarily connected. Here we do not suppose $\Hh(K)<\infty$ but we ask $K$ to be connected which is a stronger topological assumption but weaker regularity assumption than \cite{d}.

\begin{proposition}[Monotonicity of Energy, the harmonic case]\label{prop1} \cite{b,d} Let $K$ be a closed and connected set and let $u$ be a solution for the problem \eqref{problem1} with $A=Id$, $f=0$ and $\lambda=0$ (therefore $u$ is harmonic in $\Omega \setminus K$). For any point $x_0 \in K$ we denote
 $$E(r):=\int_{B(x_0,r)\setminus K}\|\nabla u \|^2 dx.$$
Then $r\mapsto {E(r)}/r$ is an increasing function of $r$ on $(0,r_0)$. As a consequence,  the limit $\lim_{r\to 0} E(r)/r$ exists and is finite.
\end{proposition}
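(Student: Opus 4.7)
The plan is to establish the differential inequality $E(r) \le r\, E'(r)$ for almost every $r \in (0,r_0)$; since $r \mapsto E(r)$ is absolutely continuous with $E'(r) = \int_{\partial B(x_0,r)\setminus K} \|\nabla u\|^2\, d\mathcal{H}^1$ for a.e.\ $r$ by the coarea formula, this inequality is equivalent to $\bigl(E(r)/r\bigr)' \ge 0$. Monotonicity of $E(r)/r$ will then force a (finite) limit as $r \to 0^+$, because $E(r)/r$ is bounded below by $0$ and above by $E(r_0)/r_0 < +\infty$.

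The first step is to rewrite $E(r)$ as a boundary integral. By Lemma~\ref{ipp} applied with $f=0$ and $\lambda=0$,
\begin{equation*}
E(r) \;=\; \int_{\partial B(x_0,r)\setminus K} u\, (\nabla u \cdot \nu) \, d\mathcal{H}^1.
\end{equation*}
Decomposing $\partial B(x_0,r) \setminus K = \bigsqcup_j I_j$ into its family of open arcs as in Lemma~\ref{ippV2}, the same lemma (again with $\lambda u - f \equiv 0$) delivers the crucial \emph{arc-wise flux neutrality}
\begin{equation*}
\int_{I_j} \nabla u \cdot \nu \, d\mathcal{H}^1 = 0 \quad \text{for every } j.
\end{equation*}
Consequently, $u$ may be replaced by $u - c_j$ on each $I_j$, where $c_j$ is the mean of $u$ on $I_j$, without altering the value of $E(r)$.

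The second step combines Cauchy--Schwarz with a weight and the Wirtinger inequality on the arc. Writing $\ell_j$ for the arclength of $I_j$ and $\partial_\tau$ for the tangential derivative along $\partial B(x_0,r)$, Wirtinger gives $\int_{I_j}(u-c_j)^2\, d\mathcal{H}^1 \le (\ell_j/\pi)^2 \int_{I_j}|\partial_\tau u|^2\, d\mathcal{H}^1$. Applying $2|ab| \le \mu a^2 + \mu^{-1}b^2$ with $\mu = \pi/\ell_j$ yields
\begin{equation*}
\int_{I_j} (u-c_j)\, (\nabla u \cdot \nu)\, d\mathcal{H}^1 \;\le\; \frac{\ell_j}{2\pi} \int_{I_j} \bigl(|\partial_\tau u|^2 + |\nabla u \cdot \nu|^2\bigr)\, d\mathcal{H}^1 \;=\; \frac{\ell_j}{2\pi}\int_{I_j} \|\nabla u\|^2\, d\mathcal{H}^1,
\end{equation*}
since $\|\nabla u\|^2 = |\partial_\tau u|^2 + |\nabla u \cdot \nu|^2$ on $\partial B(x_0,r)$. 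Summing over $j$ and using $\ell_j \le 2\pi r$ produces the desired bound $E(r) \le r\, E'(r)$, and hence the monotonicity of $E(r)/r$.

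The main obstacle is the geometric inequality $\ell_j \le 2\pi r$: the factor $r$ on the right-hand side is sharp, and one cannot afford a single arc wrapping all the way around. This is exactly where \emph{connectedness} of $K$ is used: since $x_0 \in K$ and $K$ is connected and non-trivial, $K$ crosses every circle $\partial B(x_0,r)$ with $r$ smaller than the diameter of $K$, so every arc $I_j$ is a proper sub-arc of $\partial B(x_0,r)$. Connectedness of $K$ is also what makes the topological identification of the domains $U_j$ in Lemma~\ref{ippV2} go through, and thereby underpins the flux-neutrality step on which the whole argument rests.
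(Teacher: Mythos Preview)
Your proof is correct and follows essentially the same route as the paper's: integration by parts via Lemma~\ref{ipp}, arc-wise flux neutrality from Lemma~\ref{ippV2}, Wirtinger on each arc, and Young's inequality to recombine the tangential and normal parts into $\|\nabla u\|^2$. The only cosmetic difference is that you optimize the Young parameter per arc ($\mu=\pi/\ell_j$) and then invoke $\ell_j\le 2\pi r$, whereas the paper first bounds the Wirtinger constant by $4r^2$ via $|I_j|\le 2\pi r$ and then sets $\epsilon=2r$; the outcome is identical.
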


\begin{proof}  Let us rewrite here the proof contained in  \cite{d} and \cite{b} since we want to generalize it just after. We assume without loss of generality that $x_0$ is the origin. Firstly, it is easy to show that $E$ admits a derivative a.e. and
\begin{equation}
E'(r):=\int_{\partial B(0,r)\setminus K} \|\nabla u\|^2 dx. \label{derive}
\end{equation}
In addition $E$  is absolutely continuous (see \cite{d}). Therefore, to prove the monotonicity of $r\mapsto E(r)/r$, it is enough to prove the inequality
\begin{equation}
E(r)\leq r E'(r) \quad \quad \text{ for a.e. } r\leq r_0, \label{amontrer}
\end{equation}
because this implies $\big(E(r)/r)'\geq 0$ a.e.

We will need  Wirtinger's inequality  (see e.g. page 301 of \cite{d}), i.e. for any arc of circle $I_r \subset \partial B(0,r)$ and for  $g \in W^{1,2}(I_r)$ we have
\begin{equation}
\int_{I_r} |g-m_g|^2d\Hh \leq  \left(\frac{|I_r|}{\pi}\right)^2  \int_{I_r} |g'|^2d\Hh \label{Wirtinger}
\end{equation}
where $m_g$ is the average of $g$ on $I_r$ and $g'$ is the tangential derivative on the circle. The constant here is optimal, and is achieved for the unit circle by the function $\sin(\theta/2)$ on the arc of circle $]-\pi,\pi[$.

The first Gauss-Green formula (i.e. Lemma \ref{ipp}) applied in $B(0,r)$ yields, for a.e. radius $r$,
\begin{equation}
\int_{B(0,r)\setminus K}\|\nabla u \|^2 dx = \int_{\partial B(0,r)\setminus K} u \frac{\partial u}{\partial \nu} d \mathcal{H}^1. \label{ineq1}
\end{equation}
Now $K$ is closed, so that $\partial B(0,r)\setminus K$ is a relatively open set in $\partial B(0,r)$ which is one dimensional. Therefore we can decompose 
$\partial B(0,r)\setminus K$ as a union of  arc of circles as in Lemma \ref{ippV2}, namely 
$$\partial B(0,r)\setminus K = \bigcup_{j \in J} I_j.$$
Next, we apply Lemma \ref{ippV2} to obtain for each of those arcs $I_j$,
\begin{equation}
\int_{I_j}\frac{\partial u}{\partial \nu} d\mathcal{H}^1=0. \label{topol}
\end{equation}

Denoting by $m_j$ the average of $u$ on $I_j$ we deduce that
\begin{equation}
\int_{I_j} u \frac{\partial u}{\partial \nu} d\Hh = \int_{I_j} (u-m_j) \frac{\partial u}{\partial \nu} d\Hh  \label{etape}
\end{equation}
Returning to \eqref{ineq1} and plugging \eqref{etape} we get 
\begin{equation}
\int_{B(0,r)\setminus K}\|\nabla u \|^2 dx \leq \sum_{j\in J}\int_{I_j} |u-m_j| \Big|\frac{\partial u}{\partial \nu} \Big|d \Hh. \label{etape2}
\end{equation}
 Then by use of Cauchy-Schwarz inequality and  $ab \leq \frac{1}{2\epsilon}a^2+ \frac{\epsilon}{2}b^2$ we can write
\begin{eqnarray}
\int_{I_j} |u-m_j| \Big|\frac{\partial u}{\partial \nu}\Big| d\Hh &\leq& \left(\int_{I_j} |u-m_j|^2\right)^{\frac{1}{2}} \left(\int_{I_j}\left(\frac{\partial u}{\partial \nu} \right)^2  \right)^{\frac{1}{2}} \notag \\
&\leq & \frac{1}{2\epsilon}\int_{I_j} |u-m_j|^2 +
\frac{\epsilon}{2}\int_{I_j} \left(\frac{\partial u}{\partial \nu} \right)^2. \notag
 \end{eqnarray}
Using Wirtinger inequality and setting $\epsilon=2r$ we deduce that
\begin{eqnarray}
\int_{I_j} |u-m_j| \Big|\frac{\partial u}{\partial \nu}\Big|d\Hh  &\leq&
\frac{4r^2}{2\epsilon}\int_{I_j}\left(\frac{\partial u}{\partial \tau}\right)^2 +
 \frac{\epsilon}{2}\int_{I_j} \left(\frac{\partial u}{\partial \nu} \right)^2  \notag \\
 &\leq & r\int_{I_j} \left(\frac{\partial u}{\partial \tau}\right)^2 +
 r\int_{I_j} \left(\frac{\partial u}{\partial \nu} \right)^2  \notag  \\
 &=& r\int_{I_j} \|\nabla u\|^2 . \label{CS}
 \end{eqnarray}
Finally summing over $j$,  using \eqref{etape2} and \eqref{derive} we get \eqref{amontrer}
and the proposition is proved.
\end{proof}

\subsection{Monotonicity  with a second member $f$}

Now we start to prove some variants of Bonnet's monotonicity Lemma. If $f$ is non zero, then we obtain a similar result but  we need further assumptions on $K$ to be of finite length  and locally-chord-arc.
\begin{proposition}[Monotonicity, with a second member]\label{prop11} Let $u$ be a solution for the problem \eqref{problem1} with $A=Id$, $\lambda >0$ and $f,g \in L^\infty$, and assume that $K$ is a closed and connected set of finite length. For any point $x_0 \in K$ we denote
 $$E(r):=\int_{B(x_0,r)\setminus K}\|\nabla u \|^2 dx,$$
 and we denote by  $N(r)\in [0,+\infty]$  the number of points of $K \cap \partial B(0,r)$. We assume in addition that $K$ is locally-chord-arc at point $x_0$.  Then there exists a radius $r_0$ and a constant $C$ depending only on $\|f\|_\infty$, $\|g\|_\infty$ and the locally-chord-arc constant of $K$ such that
$$r\mapsto \frac{E(r)}{r}+CP(r)$$
is an increasing function of $r$ on $(0,r_0)$, where $P(r)$ is a primitive of $N(r)$. As a consequence,  the limit $\lim_{r\to 0} E(r)/r$ exists and is finite.
\end{proposition}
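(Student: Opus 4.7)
My plan is to repeat the Wirtinger + Cauchy--Schwarz scheme of Proposition~\ref{prop1}, while carefully tracking the two sources of error introduced by $\lambda u - f$. Assuming $x_0 = 0$ and using that $u$ is bounded (by the preceding proposition), Lemma~\ref{ipp} rewrites $E(r)$ for a.e.\ $r$ as
\[
E(r) = \int_{B(0,r)\setminus K}(f - \lambda u)u\,dy + \int_{\partial B(0,r)\setminus K} u\,\partial_\nu u\,d\mathcal{H}^1,
\]
the first, bulk term being immediately bounded by $Cr^2$ with $C$ depending only on $\|u\|_\infty$, $\|f\|_\infty$, $\lambda$. Decomposing $\partial B(0,r)\setminus K = \bigsqcup_{j\in J(r)} I_j(r)$ and writing $u = (u - m_j) + m_j$ on each arc, the $(u-m_j)$ part contributes, after applying verbatim the computation of Proposition~\ref{prop1} (Cauchy--Schwarz, Wirtinger, $\epsilon = 2r$), exactly $rE'(r)$ after summation in $j$. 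The genuinely new contribution is
\[
\sum_j m_j \int_{I_j}\partial_\nu u\,d\mathcal{H}^1 = \sum_j m_j \int_{U_j(r)}(\lambda u - f)\,dx,
\]
the equality coming from Lemma~\ref{ippV2}; this term vanished in the harmonic case but does not here.

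To control it I would invoke the locally-chord-arc assumption, which forces the $K$-portion of $\partial U_j(r)$ to have length $O(r)$ and to lie in a ball $B(0, Cr)$; an isoperimetric argument then yields $|U_j(r)| \leq Cr^2$ with $C$ depending only on the chord-arc constant. Combined with $|m_j| \leq \|u\|_\infty$ and the obvious bound $|J(r)| \leq N(r)$, this produces $\bigl|\sum_j m_j \int_{U_j}(\lambda u - f)\bigr| \leq CN(r)r^2$. Together with the bulk bound this gives
\[
E(r) \leq rE'(r) + C(1 + N(r))r^2 \quad\text{for a.e.\ } r \in (0, r_0),
\]
or equivalently $(E(r)/r)' \geq -C(1 + N(r))$. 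Integrating and using that $K$ is connected and non-trivial, so that $N(s) \geq 1$ for a.e.\ small $s$ and hence $r \leq P(r):=\int_0^r N(s)\,ds$, one may absorb the constant term to obtain the desired monotonicity of $E(r)/r + C'P(r)$ for a suitable enlarged $C'$. Finiteness of $\lim_{r \to 0}E(r)/r$ then follows from the monotonicity together with $P(r) \to 0$ as $r \to 0$, which holds because $K$ has finite length and hence $N \in L^1$ near $0$.

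The step I expect to be the main obstacle is the measure estimate $|U_j(r)| \leq Cr^2$. Without a geometric hypothesis on $K$, the component $U_j(r)$ selected in Lemma~\ref{ippV2} could a priori be the unbounded one, or at least have arbitrarily large area, which would destroy the quantitative estimate. The locally-chord-arc assumption is tailored precisely to confine the $K$-portion of $\partial U_j$ to a ball of radius $O(r)$ around $x_0$ and, via a planar isoperimetric-type inequality, to yield the required quadratic bound; it is also what accounts for the appearance of the primitive $P$ of $N$ in the monotonicity statement, since each of the $N(r)$ arcs produces an independent $O(r^2)$ correction.
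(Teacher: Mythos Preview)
Your proof is correct and follows essentially the same route as the paper: integrate by parts via Lemma~\ref{ipp}, split $u=(u-m_j)+m_j$ on each arc, treat the fluctuation by Wirtinger/Cauchy--Schwarz exactly as in Proposition~\ref{prop1}, and control the new $m_j$-term through Lemma~\ref{ippV2} and the area bound $|U_j|\le Cr^2$, arriving at $E(r)\le rE'(r)+CN(r)r^2$. The only point where the paper is slightly more explicit than your sketch is the area estimate: rather than arguing abstractly that ``the $K$-portion of $\partial U_j$ has length $O(r)$'' (which the chord-arc hypothesis does not directly give---it bounds only the \emph{geodesic} between the endpoints), the paper takes the geodesic $F_j\subset K$ joining the endpoints of $I_j$, observes that $I_j\cup F_j$ encloses a region $D_j$ that coincides with the $U_j$ of Lemma~\ref{ippV2}, and then bounds $|D_j|\le Cr^2$ isoperimetrically from $\mathcal H^1(I_j\cup F_j)\le Cr$. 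Your absorption of the bulk term $Cr^2$ into $CN(r)r^2$ via $N(r)\ge 1$ is exactly what the paper does implicitly.
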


\begin{proof}  The proof is similar to the one of Proposition \ref{prop1}. We want to prove that the second member $f$ is just a perturbation under control which does not affect the limit of $E(r)/r$. Precisely, this time we will prove  the inequality
\begin{equation}
E(r)\leq r E'(r) + CN(r)r^2 \quad \quad \text{ for a.e. } r\leq r_0, \label{amontrerbis}
\end{equation}
with $N(r)\in L^1([0,r_0])$. This implies that  $\frac{d}{dr}(CP(r)+E(r)/r) \geq 0$ thus $r\mapsto E(r)/r+CP(r)$ is increasing  and this is enough to prove the Proposition because the limit of $P(r)$ exists at $0$.

We assume $x_0=0$. Observe that since $K$ has a finite length, we know that $\sharp K \cap \partial B(0,r)$ is finite for  a.e. $r \in (0,r_0)$. Actually we will need to know a bit more. If $N(r)$ denotes the number of points of $K \cap B(0,r)$, by \cite[Lemma 26.1.]{d} we know that $N$ is borel mesurable on $(0,r_0)$ and that
\begin{equation}
\int_0^{t} N(s) ds \leq \Hh(K\cap B(0,t)). \label{coaire}
\end{equation}
This will be needed later. For now, take  a radius $r$ a.e. in $(0,r_0)$ such that $N(r)<+\infty$  and decompose $S_r:=\partial B(0,r)\setminus K$ into a finite number of arcs of circle denoted $I_j$, for $j=1..N(r)$. Moreover since $K$ is closed and connected, for each $j$ there exists a geodesic curve $F_j\subset K$ connecting the two endpoints of $I_j$. We denote $D_j$ the domain delimited by $I_j$ and $F_j$. Since $K$ is locally-chord-arc at the origin we infer that
$|D_j|\leq C r^2$.
Notice also that $D_j$ corresponds to the set $U_j$ of Lemma \ref{ippV2}.

The Gauss-Green formula (Lemma \ref{ipp}) applied  in $B(0,r)$ yields
\begin{equation}
\int_{B(0,r)\setminus K}\|\nabla u \|^2 dx = \int_{B(0,r)\setminus K} (f-\lambda u)\,u\,dx + \sum_{i=1}^{N(r)}\int_{I_j} u \frac{\partial u}{\partial \nu} d\Hh, \label{ineq1bis}
\end{equation}
and applied in $D_j$ (Lemma \ref{ippV2}) gives
$$ \int_{I_j}  \frac{\partial u}{\partial \nu} d\Hh =  \pm \int_{D_j} (f-\lambda u) dx,$$
the sign depending on the relative position of $D_j$ with respect to $\partial B(0,r)$. Denoting by $m_j$ the average of $u$ on $I_j$ we deduce that
\begin{equation}
\int_{I_j} u \frac{\partial u}{\partial \nu} d\Hh = \int_{I_j} (u-m_j) \frac{\partial u}{\partial \nu} d\Hh \pm  \int_{D_j} m_j\,(f-\lambda u)\,dx. \label{etapebis}
\end{equation}
Now since $u$ is bounded it comes $|m_j|\leq C$, and we also have $\sum_{j=1}^{N(r)}|D_j|\leq C N(r) r^2$. Moreover $f$ is also bounded thus returning to \eqref{ineq1bis} and plugging \eqref{etapebis} we get
\begin{equation}
\int_{B(0,r)\setminus K}\|\nabla u \|^2 dx \leq CN(r)r^2 + \sum_{j=1}^{N(r)}\int_{I_j} |u-m_j| \Big|\frac{\partial u}{\partial \nu} \Big|d\Hh. \label{etape2bis}
\end{equation}
Then the same computations as for proving \eqref{CS} (i.e. using Cauchy-Schwarz inequality,  $ab \leq \frac{1}{2\epsilon}a^2+ \frac{\epsilon}{2}b^2$ and Wirtinger) we obtain
\begin{equation}
\int_{I_j} |u-m_j| \Big|\frac{\partial u}{\partial \nu} \Big|d\Hh   \leq r\int_{I_j} \|\nabla u \|^2 dx\;, \notag
 \end{equation}
and after summing over $j$,  \eqref{amontrerbis} is proved, as claimed.
\end{proof}

\subsection{The case of more a general operator}

We consider  now the general case with $\alpha$-H\"older regular coefficients $A : x\mapsto A(x) \in \mathcal{S}^{2\times 2}$, uniformly bounded and $\gamma$-coercive with $\gamma >0$. For any $x \in \Omega$ and $r>0$ we define the ellipsoid 
$$B_A(x,r):= \sqrt{A(x)}(B(x,r)).$$ 

\begin{proposition}[Change of variable]\label{chang} Let $u$ be a solution for Problem \ref{problem1} in $\Omega$, that we assume to contain the origin, and fix $A_0:=A(0)$.   Then  $u\circ\sqrt{A_0}$ is the solution of a similar problem in $(\sqrt{A_0})^{-1}(\Omega\setminus K)$ with coefficient $\tilde{A}:=(\sqrt{A_0})^{-1}\circ A \circ (\sqrt{A_0})^{-1}$ instead of $A$. In particular $\tilde{A}(0)=Id$, and 
\begin{equation}
\int_{B(0,r)}\|\nabla v\|_{\tilde A}^2dx = \int_{B_A(0,r))} \|\nabla u\|_A^2 \det{(\sqrt{A_0})^{-1}}dx \label{eq_energy_ch}
\end{equation}
\end{proposition}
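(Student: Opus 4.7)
The plan is to carry out a direct linear change of variables $y=\sqrt{A_0}\,x$, set $v(x):=u(\sqrt{A_0}\,x)$, $\tilde K:=(\sqrt{A_0})^{-1}(K)$, and verify each of the three assertions (the PDE/Neumann problem, the normalization $\tilde A(0)=Id$, and the energy identity) by the chain rule together with the standard Jacobian formula. The pointwise calculation is the heart of it: since $\nabla v(x)=\sqrt{A_0}\,\nabla u(\sqrt{A_0}\,x)$, equivalently $\nabla u(\sqrt{A_0}\,x)=\sqrt{A_0}^{-1}\nabla v(x)$, a substitution into the quadratic form gives
\[
\bigl(A(\sqrt{A_0}\,x)\,\nabla u(\sqrt{A_0}\,x)\bigr)\cdot\nabla u(\sqrt{A_0}\,x)
\;=\;
\bigl(\sqrt{A_0}^{-1}A(\sqrt{A_0}\,x)\sqrt{A_0}^{-1}\nabla v(x)\bigr)\cdot\nabla v(x),
\]
which identifies the new coefficient matrix as $\tilde A(x)=\sqrt{A_0}^{-1}\circ A(\sqrt{A_0}\,x)\circ\sqrt{A_0}^{-1}$. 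At $x=0$ this collapses to $\sqrt{A_0}^{-1}A_0\sqrt{A_0}^{-1}=Id$, which is the normalization claimed.

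Next I would derive the energy identity \eqref{eq_energy_ch}. Using $dy=\det\sqrt{A_0}\;dx$, i.e. $dx=\det((\sqrt{A_0})^{-1})\,dy$, and the pointwise identity above (read with $y=\sqrt{A_0}\,x$), one rewrites
\[
\int_{B(0,r)}\|\nabla v\|_{\tilde A}^{2}\,dx
\;=\;
\int_{\sqrt{A_0}(B(0,r))}\|\nabla u\|_{A}^{2}\,\det((\sqrt{A_0})^{-1})\,dy,
\]
and observes that $\sqrt{A_0}(B(0,r))=B_A(0,r)$ by definition, which is precisely the identity to be proved.

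It remains to check that $v$ solves on $\sqrt{A_0}^{-1}(\Omega\setminus K)$ a Neumann problem of the form \eqref{problem1} with coefficient $\tilde A$, right-hand side $\tilde f(x):=f(\sqrt{A_0}\,x)$, and boundary datum $\tilde g(x):=g(\sqrt{A_0}\,x)$. I would work in the weak formulation \eqref{ipp2}: for any test function $\varphi\in H^{1}(\Omega\setminus K)$, set $\psi(x):=\varphi(\sqrt{A_0}\,x)$, which lies in $H^{1}(\sqrt{A_0}^{-1}(\Omega\setminus K))$ with compact support in $\sqrt{A_0}^{-1}(\Omega)$. Applying the chain rule to both $u$ and $\varphi$, then changing variables $y=\sqrt{A_0}\,x$, one obtains
\[
\int (\tilde A\nabla v)\cdot\nabla\psi\,dx
\;=\;
\int(\tilde f-\lambda v)\,\psi\,dx,
\]
after cancelling the common Jacobian factor $\det\sqrt{A_0}$ on both sides. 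This is the weak Neumann formulation for $v$ on $\sqrt{A_0}^{-1}(\Omega\setminus K)$ with coefficient $\tilde A$, and the Dirichlet condition on $\partial(\sqrt{A_0}^{-1}\Omega)\setminus\tilde K$ is inherited directly from the pointwise definition of $v$.

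There is no real obstacle here beyond careful bookkeeping of the Jacobians and making sure the two factors of $\sqrt{A_0}^{-1}$ that appear from the chain rule (one on $\nabla u$, one on $\nabla\varphi$) are grouped with $A$ rather than with the measure. The H\"older continuity of $A$ is not used in this proposition itself; it only enters later when exploiting the fact that $\tilde A(x)-Id=O(|x|^{\alpha})$ near the origin.
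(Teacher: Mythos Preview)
Your proof is correct and follows essentially the same route as the paper: a direct linear change of variables $y=\sqrt{A_0}\,x$, the chain rule to identify $\tilde A$, and the Jacobian to obtain the energy identity. The only cosmetic difference is that the paper concludes $v$ solves the new problem by transforming the minimization of the functional $F$ into that of $\tilde F$, whereas you transform the weak Euler--Lagrange identity \eqref{ipp2} directly; these are equivalent.
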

\begin{proof} Let $u$ be a solution for Problem \ref{problem1}, and consider $v:=u\circ(\sqrt{A_0})$. Then since $\sqrt{A_0}$ is symmetric we have   $\nabla v(y) =  (\sqrt{A_0}) \nabla u ((\sqrt{A_0})(y))$ and  
\begin{eqnarray}
\int_{\Omega} \|\nabla u\|_A^2 dx &=& \int_{\Omega} \|\sqrt{A}\nabla u \|^2 dx \notag \\
&=& \int_{\sqrt{A_0}^{-1}(\Omega)} \|\sqrt{A}\nabla u \circ \sqrt{A_0}\|^2 \det(\sqrt{A_0}) dx \notag \\
&=&  \int_{\sqrt{A_0}^{-1}(\Omega)} \|\sqrt{A}\sqrt{A_0}^{-1}\nabla v\|^2 \det(\sqrt{A_0}) dx \notag \\
&=&  \int_{\sqrt{A_0}^{-1}(\Omega)} \|\nabla v\|^2_{\tilde A} \det(\sqrt{A_0}) dx \label{change1} 
\end{eqnarray}
with $\tilde{A}:=(\sqrt{A_0})^{-1}\circ A \circ (\sqrt{A_0})^{-1}$. Therefore if $u$ is a minimizer for the functional $F$ defined in \eqref{defF}, then $v$ must be a minimizer for the functional
\begin{equation}
\tilde {F}(v):=\int_{\tilde{\Omega}}\|\nabla v\|_{\tilde A}^2 dx + \frac{1}{\lambda}\int_{\tilde{\Omega}}|\lambda v- \tilde{f}  |^2 \label{defFtilde},
\end{equation}
with $\tilde{\Omega}:=\sqrt{A_0}^{-1}(\Omega \setminus K)$ and $\tilde f:=f\circ\sqrt{A_0}$. Finally, the same change of variable as the one used for \eqref{change1} proves \eqref{eq_energy_ch}, which completes the  proof of the proposition.
\end{proof}

Here is now  the analogue of Proposition \ref{prop1}.
\begin{proposition}[Monotonicity of energy for general coefficients]\label{prop1bisA} Assume that $K$ is a closed and connected set.  Let $u$ be a solution for the problem \eqref{problem1} with some $\alpha$-H\"older regular coefficients $A$, and with $\lambda =0$ and $f=0$. For any point $x_0 \in K$ we denote
 $$E(r):=\int_{B_A(x_0,r)\setminus K}\|\nabla u \|_A^2 dx.$$
Then the function
$$r \mapsto \frac{E(r)}{r}(1+Cr^{\frac{\alpha}{2}})^{\frac{2}{\alpha}}$$
is nondecreasing. As a consequence, the limit $\lim_{r\to 0}(E(r)/r)$ exists and is finite. 
  \end{proposition}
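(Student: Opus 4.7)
The plan is to reduce to the case $A(0)=Id$ via the change of variable from Proposition~\ref{chang}, and then perturb the proof of Proposition~\ref{prop1} by the Hölder fluctuation of $A$. Without loss of generality take $x_0=0$, set $A_0:=A(0)$, and define $v(y):=u(\sqrt{A_0}\,y)$. By Proposition~\ref{chang}, $v$ solves the analogous problem on $(\sqrt{A_0})^{-1}(\Omega\setminus K)$ with Hölder coefficient $\tilde A$ satisfying $\tilde A(0)=Id$; the ellipsoid $B_A(0,r)$ becomes the Euclidean ball $B(0,r)$, and~\eqref{eq_energy_ch} shows that $E(r)$ differs from $\int_{B(0,r)\setminus \tilde K}\|\nabla v\|_{\tilde A}^2\,dy$ only by the constant $\det(\sqrt{A_0})$. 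So from now on we assume $A(0)=Id$, that the ellipsoid is the Euclidean ball, and that $M(r):=\sup_{|x|\leq r}\|A(x)-Id\|\leq Cr^\alpha$.

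Next, I mimic the proof of Proposition~\ref{prop1}. The Gauss-Green formula (Lemma~\ref{ipp}), with $\lambda=f=0$, gives for a.e.\ $r$
$$E(r)\;=\;\int_{\partial B(0,r)\setminus K} u\,(A\nabla u)\cdot\nu \;d\mathcal{H}^1.$$
Decompose $\partial B(0,r)\setminus K=\bigcup_j I_j$ as in Lemma~\ref{ippV2}; the same lemma (with $\lambda=f=0$) yields $\int_{I_j}(A\nabla u)\cdot\nu\,d\mathcal{H}^1=0$, so we may subtract the mean $m_j$ of $u$ on $I_j$. Splitting $(A\nabla u)\cdot\nu = \partial u/\partial\nu + ((A-Id)\nabla u)\cdot\nu$, I estimate the main term $\int_{I_j}|u-m_j|\,|\partial u/\partial\nu|\,d\mathcal{H}^1$ exactly as in the harmonic inequality~\eqref{CS} to get $r\int_{I_j}|\nabla u|^2\,d\mathcal{H}^1$, while the error term $\int_{I_j}|u-m_j|\,|((A-Id)\nabla u)\cdot\nu|\,d\mathcal{H}^1$ is bounded by Cauchy-Schwarz together with Wirtinger's inequality by at most $2M(r)\,r\int_{I_j}|\nabla u|^2\,d\mathcal{H}^1$. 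Using the coercivity $|\nabla u|^2\leq (1-M(r))^{-1}\|\nabla u\|_A^2$ (which follows from $A\geq (1-M)\,Id$ as quadratic forms) and summing over $j$ gives the perturbed Bonnet inequality
$$E(r)\;\leq\; r\,\frac{1+2M(r)}{1-M(r)}\,E'(r)\;\leq\; r\bigl(1+C\,M(r)\bigr)\,E'(r)$$
for a.e.\ small $r$, with $C$ depending only on $\alpha$, $\gamma$, and the Hölder seminorm of $A$.

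Finally, I convert this into the stated monotonicity. The last inequality gives
$$\frac{d}{dr}\log\frac{E(r)}{r}\;=\;\frac{E'(r)}{E(r)}-\frac{1}{r}\;\geq\;-\frac{C\,M(r)}{r(1+C\,M(r))}\;\geq\;-C'\,r^{\alpha-1}.$$
Choose $C_0$ large enough that $\phi(r):=(1+C_0 r^{\alpha/2})^{2/\alpha}$ satisfies
$$\frac{\phi'(r)}{\phi(r)}\;=\;\frac{C_0\,r^{\alpha/2-1}}{1+C_0\,r^{\alpha/2}}\;\geq\; C'\,r^{\alpha-1}$$
for all small $r$ (possible since $r^{\alpha/2-1}\geq r^{\alpha-1}$ for $r\leq 1$). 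Adding the two estimates yields $\bigl(\log(E(r)\phi(r)/r)\bigr)'\geq 0$ a.e., i.e.\ the desired monotonicity. Since $\phi(r)\to 1$ as $r\to 0$, the limit $\lim_{r\to 0}E(r)/r$ exists and is finite.

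The main technical obstacle is Step 2: keeping the Hölder perturbation of $A$ from $Id$ as a multiplicative $O(r^\alpha)$ correction in Bonnet's inequality, rather than as an additive loss or a factor that does not tend to $1$. The key trick is the splitting $(A\nabla u)\cdot\nu = \partial u/\partial\nu + ((A-Id)\nabla u)\cdot\nu$, which lets the harmonic argument of Proposition~\ref{prop1} handle the main part exactly while the error is controlled by the Hölder modulus $M(r)$. Once this is done, producing the correction factor $\phi$ is a routine ODE manipulation.
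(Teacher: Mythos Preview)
Your proof is correct and follows the same overall scheme as the paper (reduce to $A(0)=Id$ via Proposition~\ref{chang}, run Bonnet's argument with a H\"older perturbation, then conclude by a Gronwall-type lemma), but the key perturbation step is organized differently. The paper keeps the factor $(A\nabla u)\cdot\nu$ intact, applies Cauchy--Schwarz and Wirtinger to obtain
\[
r\int_{I_j}|\nabla u\cdot\tau|^2 + r\int_{I_j}|(A\nabla u)\cdot\nu|^2,
\]
and then expands $|\nabla u\cdot\tau|^2$ around $|(A\nabla u)\cdot\tau|^2$ to rebuild $\|A\nabla u\|^2$; a final estimate $\|A\nabla u\|^2\le (1+Cr^{\alpha/2})\|\nabla u\|_A^2$ yields $E(r)\le (r+Cr^{1+\alpha/2})E'(r)$, and Lemma~\ref{gronwall} (with exponent $\alpha/2$) gives exactly the stated monotone quantity. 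You instead split the \emph{normal} factor, $(A\nabla u)\cdot\nu=\partial u/\partial\nu + ((A-Id)\nabla u)\cdot\nu$, handle the main term by the harmonic inequality~\eqref{CS} verbatim, and treat the remainder as an $O(M(r))$ correction. This is slightly cleaner and actually gives the sharper differential inequality $E(r)\le r(1+Cr^{\alpha})E'(r)$; you then recover the factor $(1+Cr^{\alpha/2})^{2/\alpha}$ of the statement by the elementary comparison $r^{\alpha/2-1}\ge r^{\alpha-1}$ for small $r$. Both routes are valid; yours avoids the somewhat artificial appearance of $\alpha/2$ in the intermediate estimate at the cost of a small extra ODE remark at the end. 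One minor caveat: your logarithmic derivative step tacitly assumes $E(r)>0$; the degenerate case $E\equiv 0$ on an interval is trivial, and the paper's formulation $(E e^{-h})'\ge 0$ in Lemma~\ref{gronwall} sidesteps this issue cleanly.
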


\begin{proof} We will use a third time a variation of Bonnet's monotonicity Lemma, i.e. we will follow again the proof of Proposition \ref{prop1}. Let us assume without loss of generality that $x_0$ is the origin. First of all,  up to the change of coordinates $x \mapsto \sqrt{A(0)}x$ and thank to Proposition \ref{chang} we can assume without loss of generality that $A(0)=Id$. In this case $B_A(x,r)=B_{Id}(x,r)=B(x,r)$.

The Gauss-Green formula (Lemma \ref{ipp}) applied in $B(0,r)$ yields
\begin{equation}
\int_{B(0,r)\setminus K}\|\nabla u \|_A^2 dx =  \sum_{j} \int_{I_j} u  (A\nabla u) \cdot \nu \; d\Hh, \label{ineq1r}
\end{equation}
where $\partial B(0,r)\setminus K = \cup_j I_j$. On the other hand Lemma \ref{ippV2} gives for each $j$,
$$ \int_{I_j}(A\nabla u) \cdot \nu \; dx = 0.$$
Denoting by $m_j$ the average of $u$ on $I_j$ we deduce that
\begin{equation}
\int_{I_j} u (A\nabla u)  \cdot \nu \; d\Hh = \int_{I_j} (u-m_j) (A\nabla u) \cdot \nu \; d\Hh . \label{etaper_bis}
\end{equation}
Thus
\begin{equation}
\int_{B(0,r)\setminus K}\|\nabla u \|^2_A dx \leq  \sum_{j=1}^N\int_{I_j} |u-m_j| | (A\nabla u)  \cdot \nu |\;d\Hh. \label{etape2ter}
\end{equation}
Then by use of Cauchy-Schwarz inequality,  $ab \leq \frac{1}{4r}a^2+ rb^2$, and Wirtinger we can write
\begin{eqnarray}
\int_{I_j} |u-m_j| |(A \nabla u) \cdot \nu| d\Hh &\leq& \left(\int_{I_j} |u-m_j|^2\right)^{\frac{1}{2}} \left(\int_{I_j} |(A \nabla u) \cdot  \nu |^2  \right)^{\frac{1}{2}} \notag \\
&\leq & \frac{1}{4r}\int_{I_j} |u-m_j|^2 +
r\int_{I_j} | (A\nabla u) \cdot \nu |^2 \notag \\
 &\leq&
r\int_{I_j} |\nabla u \cdot \tau|^2 +
r\int_{I_j} |(A\nabla u )\cdot \nu |^2.  \label{etape3}
 \end{eqnarray}
Now we want to recover the full norm $\|\nabla u\|_A$ from the partial norms $|\nabla u \cdot \tau|$ and $|(A\nabla u)\cdot \nu|$. For this purpose we write 
$$|\nabla u \cdot \tau |^2= |(A \nabla u) \cdot  \tau|^2+|(Id-A)\nabla u\cdot \tau|^2 + [2(A \nabla u) \cdot \tau][ ((Id-A)\nabla u)\cdot \tau],$$ 
and we notice that, by H\"older regularity of $A$  and $\gamma$-coerciveness we have (the constant $C$ can vary from line to line)
\begin{eqnarray}
|(Id-A)\nabla u\cdot \tau|^2 &\leq& \|(Id-A)\nabla u\|^2  \notag \\
&\leq & \|Id-A\|_{\mathcal{L}(\R^2,\R^2)}^2 \|\nabla u\|^2 \notag \\
 &\leq& Cr^{2\alpha}\|\nabla u\|^2 \notag \\
 &\leq & \gamma C r^{2\alpha}\|\nabla u\|^2_A \;,  \label{ineqnorm}
 \end{eqnarray}
 and
 \begin{eqnarray}
 2|A \nabla u \cdot  \tau (Id-A)\nabla u\cdot\tau| &=& 2|A \nabla u \cdot \tau|  |(Id-A)\nabla u \cdot \tau| \notag \\
 &\leq& 2 \|A \nabla u \|    Cr^{\alpha}\|\nabla u\|_A \notag \\
 &\leq & C r^{\alpha} \|A\|_{\infty} \|\nabla u\|_A^2 \notag.
 \end{eqnarray}
  Therefore summing over $j$ and putting all the estimates together we have proved that for $r$ small enough,
 \begin{eqnarray}
 \int_{B(0,r)\setminus K}\!\!
\|\nabla u \|_A^2 dx &\leq&
r\int_{\partial B(0,r)}\!\!
(|A\nabla u \cdot \tau |^2+Cr^{\alpha}\|\nabla u  \|_A^2)
+r\int_{\partial B(0,r)}\!\!
|(A\nabla u)\cdot \nu |^2 \notag \\
 &= & r\int_{\partial B(0,r)}\!\!
 \|A\nabla u\|^2 
+Cr^{1+\alpha}\int_{\partial B(0,r)}\!\!
 \|\nabla u  \|_A^2 . \label{inequ_c}
 \end{eqnarray}
  By H\"older regularity of $A$ we infer that 
 $$\|\sqrt{A}\|_{\mathcal{L}(\R^2,\R^2)}^2\leq 1+Cr^{\alpha/2},$$
which implies
\begin{eqnarray}
 \int_{\partial B(0,r)}\!\!
\|A\nabla u\|^2&\leq& \int_{\partial B(0,r)}\!\!
\|\sqrt{A}\|^2 \|\sqrt{A}\nabla u\|^2 \notag \\
 & \leq &(1+Cr^{\alpha/2})\int_{\partial B(0,r)} \!\!
\|\sqrt{A}\nabla u\|^2 
 =  (1+Cr^{\alpha/2})\int_{\partial B(0,r)}\!\!
 \|\nabla u\|_A^2 .\notag 
  \end{eqnarray}
Therefore, since $E'(r)=\int_{\partial B(0,r)}\|\nabla u\|_A^2$ we have proved for $r$ small enough,
 $$E(r)\leq (r+Cr^{1+\alpha/2})E'(r),$$
and we conclude with Lemma \ref{gronwall} below, applied with the exponent $\alpha/2 \in (0,1)$.
\end{proof}

\begin{lemma}[Gronwall type, version 1] \label{gronwall} Assume that $E(r)$  admits a derivative a.e. on $[0,r_0]$, is absolutely continuous,  and satisfies the following inequality for some $\alpha \in (0,1)$
\begin{equation}
 E(r)\leq (r+Cr^{1+\alpha})E'(r), \quad \forall r \in[0,r_0]. \label{ineqdiff}
\end{equation}
Then the function
$$r \mapsto \frac{E(r)}{r}(1+Cr^{\alpha})^{\frac{1}{\alpha}}$$
is nondecreasing. As a consequence, the limit $\lim_{r\to 0}(E(r)/r)$ exists and is finite.  

\end{lemma}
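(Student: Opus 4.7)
The plan is to exhibit $\Phi(r) := \frac{E(r)}{r}(1+Cr^{\alpha})^{1/\alpha}$ as (up to the factor $\phi(r):=(1+Cr^\alpha)^{1/\alpha}$) the canonical integrating-factor form that turns the differential inequality \eqref{ineqdiff} into the statement $\Phi'(r) \geq 0$ a.e. The guess $\phi$ is suggested by rewriting \eqref{ineqdiff} as
\[
\frac{E'(r)}{E(r)} \;\geq\; \frac{1}{r(1+Cr^{\alpha})} \;=\; \frac{1}{r} \;-\; \frac{Cr^{\alpha-1}}{1+Cr^{\alpha}},
\]
so that a formal integration produces $\log E(r) \geq \log r - \tfrac{1}{\alpha}\log(1+Cr^\alpha)+\text{const}$, i.e. $E(r)/r \geq \text{const}\cdot (1+Cr^\alpha)^{-1/\alpha}$.

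First, I would compute $\phi'(r) = Cr^{\alpha-1}(1+Cr^\alpha)^{1/\alpha - 1}$, so that
\[
\frac{\phi'(r)}{\phi(r)} \;=\; \frac{Cr^{\alpha-1}}{1+Cr^\alpha},
\qquad
\frac{1}{r} - \frac{\phi'(r)}{\phi(r)} \;=\; \frac{1}{r(1+Cr^\alpha)} \;=\; \frac{1}{r+Cr^{1+\alpha}}.
\]
Then, writing $\Phi(r) = E(r)\phi(r)/r$ and using that $E$ is absolutely continuous (hence so is $\Phi$ on every compact subinterval of $(0,r_0]$), I would differentiate to get, a.e.,
\[
\Phi'(r) \;=\; \frac{\phi(r)}{r}\Bigl[E'(r) + E(r)\tfrac{\phi'(r)}{\phi(r)} - \tfrac{E(r)}{r}\Bigr]
\;=\; \frac{\phi(r)}{r}\Bigl[E'(r) - \tfrac{E(r)}{r+Cr^{1+\alpha}}\Bigr].
\]
The bracket is nonnegative by the hypothesis \eqref{ineqdiff}, and $\phi(r)/r>0$, so $\Phi'\geq 0$ a.e.; since $\Phi$ is absolutely continuous this gives monotonicity.

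For the second assertion, note that $E\geq 0$ gives $\Phi\geq 0$, so the nondecreasing function $\Phi$ has a finite nonnegative limit $\ell := \lim_{r\to 0^+}\Phi(r) = \inf_{r>0}\Phi(r)$; since $\phi(r)\to 1$ as $r\to 0^+$, the identity $E(r)/r = \Phi(r)/\phi(r)$ yields $\lim_{r\to 0^+} E(r)/r = \ell \in [0,+\infty)$. No step is a real obstacle here: the entire content is identifying the correct integrating factor $\phi$, and the rest is a mechanical verification plus the standard observation that a monotone function on $(0,r_0]$ bounded below has a finite limit at $0$.
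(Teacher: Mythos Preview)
Your proof is correct and follows essentially the same route as the paper's: both identify the integrating factor $e^{-h(r)}$ where $h(r)=\ln\bigl(r/(1+Cr^\alpha)^{1/\alpha}\bigr)$ is a primitive of $1/(r+Cr^{1+\alpha})$, and observe that $E(r)e^{-h(r)}=\Phi(r)$ has nonnegative derivative by the hypothesis. One small remark: in the final step you invoke $E\geq 0$ to bound $\Phi$ from below, but this is not part of the lemma's stated hypotheses (the paper's own argument for finiteness is similarly implicit on this point); of course in every application $E$ is an energy and nonnegativity is automatic.
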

\begin{proof} Observe that a primitive of $1/(r+Cr^{1+\alpha})$ is
\begin{equation}
\int \frac{1}{r+Cr^{1+\alpha}} dr\ =\ \ln\left(\frac{r}{(Cr^\alpha+1)^{\frac{1}{\alpha}}}\right)
\ =:\ h(r) . \label{primitive}
\end{equation}
Hence \eqref{ineqdiff} yields that
\[
\left(E(r)e^{-h(r)}\right)'\,=\, (-h'(r) E(r)+E'(r))e^{-h(r)}\,\ge\,0\,,
\]
in other words,
$$r\mapsto \frac{E(r)}{r}(1+Cr^{\alpha})^{\frac{1}{\alpha}}$$
is nondecreasing.
Therefore the limit of $E(r)(1+Cr^{\alpha})^{\frac{1}{\alpha}}/r$
exists when $r$ goes to zero, and since 
$(1+Cr^{\alpha})^{\frac{1}{\alpha}}$ converges to $1$, we obtain the existence of limit also for $E(r)/r$. Now by monotonicity, this limit is necessarily finite since less than $\frac{E(r_0)}{r_0}(1+Cr_0^{\alpha})^{\frac{1}{\alpha}}$ which is finite for some $r_0$ fixed.
\end{proof}

We also have an analogue of Proposition \ref{prop11} in the context of general coefficients which is the proposition below.
\begin{proposition}[Energy estimate for general coefficients and second member]\label{prop11A} Let $u$ be a solution for the problem \eqref{problem1} with $\alpha$-H\"olderian coefficients $A$, $\lambda >0$ and $f,g \in L^\infty$, and assume that $K$ is a closed and connected set of finite length. For any point $x_0 \in K$ we denote
 $$E(r):=\int_{B_A(x_0,r)\setminus K}\|\nabla u \|^2_A dx.$$
We assume in addition that $K$ is locally-chord-arc at point $x_0$.  Then the limit $\lim_{r\to 0}(E(r)/r)$ exists and is finite.  
\end{proposition}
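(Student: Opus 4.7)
The proof plan is to combine the two generalizations already developed in the previous sections: the bulk-perturbation technique of Proposition \ref{prop11} with the H\"older-coefficient perturbation technique of Proposition \ref{prop1bisA}. All of the ingredients are essentially in place; the work is in combining them and handling the resulting differential inequality.

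As a preliminary step, after Proposition \ref{chang} we may assume without loss of generality that $x_0=0$ and $A(0)=Id$, so that $B_A(0,r)=B(0,r)$ and $\tilde f\in L^\infty$, $\tilde g\in L^\infty$ after the change of variable. We may also assume that $u$ is bounded by the boundedness proposition.

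Next, fix a radius $r$ in $(0,r_0)$ for which the number $N(r)$ of points in $K\cap\partial B(0,r)$ is finite, and decompose $\partial B(0,r)\setminus K=\bigcup_{j=1}^{N(r)} I_j$ into arcs. By the locally-chord-arc hypothesis, each pair of endpoints of $I_j$ is joined by a geodesic $F_j\subset K$ of length at most $Cr$, and together $I_j\cup F_j$ bounds a domain $D_j$ (which is precisely the set $U_j$ of Lemma \ref{ippV2}) with $|D_j|\le Cr^2$. Apply Lemma \ref{ipp} in $B(0,r)$ and Lemma \ref{ippV2} in each $D_j$. As in the proof of Proposition \ref{prop11}, subtracting the averages $m_j$ of $u$ on $I_j$ and using Lemma \ref{ippV2} produces the identity
\begin{equation*}
\int_{B(0,r)\setminus K}\!\!\|\nabla u\|_A^2\,dx
= \int_{B(0,r)\setminus K}(f-\lambda u)u\,dx
+ \sum_j\int_{I_j}(u-m_j)(A\nabla u)\!\cdot\!\nu\,d\mathcal{H}^1
\;\pm\;\sum_j m_j\!\int_{D_j}\!(f-\lambda u)\,dx.
\end{equation*}
Since $u$ and $f$ are bounded, the first term is $O(r^2)$ and the last term is bounded by $C N(r)r^2$.

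For the middle boundary term, I would apply Cauchy--Schwarz, Young's inequality (with parameter $\epsilon=2r$), and Wirtinger's inequality on each arc $I_j$, exactly as in Proposition \ref{prop1bisA}. The H\"older regularity $\|A-Id\|\le Cr^\alpha$ on $B(0,r)$ is used to pass from $|A\nabla u\cdot\nu|^2$ and $|\nabla u\cdot\tau|^2$ back to the full $A$-norm $\|\nabla u\|_A^2$, at the cost of a multiplicative factor $1+Cr^{\alpha/2}$. Combining everything yields a differential inequality of the form
\begin{equation*}
E(r)\;\le\;\bigl(r+Cr^{1+\alpha/2}\bigr)E'(r)\;+\;C r^2\bigl(N(r)+1\bigr)
\qquad\text{for a.e. }r\in(0,r_0).
\end{equation*}

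Finally, I would close the argument with a Gronwall-type lemma analogous to Lemma \ref{gronwall} but with a forcing term. Dividing by $r+Cr^{1+\alpha/2}$ and writing $h(r)=\ln(r/(1+Cr^{\alpha/2})^{2/\alpha})$, we get
$$\bigl(E(r)e^{-h(r)}\bigr)'\ \ge\ -C\,r\bigl(N(r)+1\bigr)e^{-h(r)},$$
so the function
$$r\ \longmapsto\ \frac{E(r)}{r}(1+Cr^{\alpha/2})^{2/\alpha}
\ +\ C\!\int_0^r \!\bigl(N(s)+1\bigr)(1+Cs^{\alpha/2})^{2/\alpha}\,ds$$
is nondecreasing on $(0,r_0)$. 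Since $N\in L^1(0,r_0)$ by \eqref{coaire} and $\mathcal{H}^1(K)<\infty$, the primitive term tends to $0$ as $r\to 0$, so $E(r)/r$ admits a finite limit.

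The main obstacle is bookkeeping: one has to ensure simultaneously that the three sources of error (the H\"older perturbation of $A$, the bulk term $(f-\lambda u)u$, and the chord-arc boundary correction with $m_j$) can be packed into a single Gronwall-type inequality whose forcing term is integrable. Integrability of the forcing relies crucially on the coarea-type estimate \eqref{coaire}, which requires $\mathcal{H}^1(K)<\infty$; the chord-arc assumption is what makes $|D_j|\le Cr^2$ hold uniformly so that the error sums up to $O(N(r)r^2)$ rather than something larger.
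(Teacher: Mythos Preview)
Your proposal is correct and follows essentially the same route as the paper: combine the perturbation arguments of Propositions~\ref{prop11} and~\ref{prop1bisA} to reach the differential inequality $E(r)\le (r+Cr^{1+\alpha/2})E'(r)+CN(r)r^2$ (your extra ``$+1$'' is harmless since $N(r)\ge 1$ a.e.), and then close with a Gronwall-type step. The only cosmetic difference is that the paper packages the final ODE step as a separate Lemma~\ref{gronwall2} via variation of constants, whereas you write out the equivalent integrating-factor argument directly.
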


\begin{proof} We follow the proof of Proposition \ref{prop11}, with the changes already used in the proof of Proposition \ref{prop1bisA}. The main difference with the preceding propositions is that we arrive now to the inequality 
$$E(r)\leq  (r+Cr^{1+\alpha/2})E'(r) + CN(r)r^2.$$
then we conclude with the Lemma \ref{gronwall2} below.
\end{proof}

\begin{lemma}[Gronwall type, version 2]\label{gronwall2} Assume that $E(r)$  admits a derivative a.e. on $[0,r_0]$, is absolutely continuous, and satisfies the following inequality for some $\alpha \in (0,1)$
\begin{equation}
 E(r)\leq (r+Cr^{1+\alpha})E'(r)+CN(r)r^2, \quad \forall r \in[0,r_0], \label{lastineqdiff}
\end{equation}
with $N$ integrable on $(0,r_0)$. Then the limit $\lim_{r\to 0} E(r)/r$
   exists and is finite.
\end{lemma}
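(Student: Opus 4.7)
The plan is to reuse the integrating-factor trick from Lemma \ref{gronwall} and treat the extra term $CN(r)r^{2}$ as an integrable perturbation. Set $h(r)=\ln\bigl(r/(Cr^{\alpha}+1)^{1/\alpha}\bigr)$ as in \eqref{primitive}, so that $h'(r)=1/(r+Cr^{1+\alpha})$. Dividing the assumed inequality \eqref{lastineqdiff} by $r+Cr^{1+\alpha}$, I would obtain
$$E'(r)-h'(r)E(r)\ \geq\ -\frac{CN(r)r^{2}}{r+Cr^{1+\alpha}}\ \geq\ -CN(r)r,$$
since $r+Cr^{1+\alpha}\geq r$. Multiplying by $e^{-h(r)}=(1+Cr^{\alpha})^{1/\alpha}/r$, this becomes
$$\bigl(E(r)e^{-h(r)}\bigr)'\ \geq\ -CN(r)\,(1+Cr^{\alpha})^{1/\alpha}\ \geq\ -C'N(r)$$
for some constant $C'$, since $(1+Cr^{\alpha})^{1/\alpha}$ is bounded on $[0,r_{0}]$.

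Next, define the auxiliary function
$$\Phi(r)\ :=\ E(r)e^{-h(r)}\ +\ C'\int_{0}^{r}N(s)\,ds,$$
which is well defined because $N\in L^{1}(0,r_{0})$. By the preceding inequality, $\Phi'\geq 0$ a.e., and since $\Phi$ is absolutely continuous (it is the sum of two absolutely continuous functions), $\Phi$ is nondecreasing on $(0,r_{0}]$. Moreover $\Phi\geq 0$, so $\lim_{r\to 0^{+}}\Phi(r)=\inf_{r\in(0,r_{0}]}\Phi(r)\in[0,+\infty)$ exists and is finite.

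Finally, I would read off the existence of the desired limit: since $\int_{0}^{r}N(s)\,ds\to 0$ as $r\to 0^{+}$ (by integrability of $N$), we deduce that $E(r)e^{-h(r)}=\Phi(r)-C'\int_{0}^{r}N(s)\,ds$ converges to the same finite limit $\ell:=\lim_{r\to 0^{+}}\Phi(r)$. Because $e^{-h(r)}=(1+Cr^{\alpha})^{1/\alpha}/r$ and $(1+Cr^{\alpha})^{1/\alpha}\to 1$, this forces
$$\lim_{r\to 0^{+}}\frac{E(r)}{r}\ =\ \ell\ \in\ [0,+\infty),$$
which is exactly the statement of the lemma. The only subtle point, which is essentially already handled in the above outline, is to verify that the constant in front of $N(r)$ after multiplication by $e^{-h(r)}$ stays uniformly bounded on $[0,r_{0}]$ — this is immediate from continuity of $(1+Cr^{\alpha})^{1/\alpha}$ at $r=0$, so there is no genuine obstacle beyond the bookkeeping.
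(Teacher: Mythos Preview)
Your proof is correct and follows essentially the same integrating-factor idea as the paper: both rely on the primitive $h(r)$ from \eqref{primitive} and treat the inhomogeneous term $CN(r)r^{2}$ as an integrable perturbation. The paper packages this as the classical ``variation of constants'' step---it builds an explicit particular solution $G(r)=\lambda(r)\,r/(Cr^{\alpha}+1)^{1/\alpha}$ with $\lambda(r)=-C\int_{0}^{r}N(t)(Ct^{\alpha}+1)^{(1-\alpha)/\alpha}\,dt$, subtracts it to reduce to the homogeneous inequality, and then invokes Lemma~\ref{gronwall}---whereas you go straight to the monotone quantity $\Phi(r)=E(r)e^{-h(r)}+C'\int_{0}^{r}N$; these are two faces of the same computation, and your route is arguably a bit more direct. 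One small remark: your assertion ``$\Phi\geq 0$'' tacitly uses $E\geq 0$, which is not part of the hypotheses of the lemma as stated; this is harmless in the paper's applications (where $E$ is an energy) and the paper's own argument for Lemma~\ref{gronwall} makes the same implicit use, but it is worth saying explicitly.
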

\begin{proof} Let us first find a particular solution of the inhomogeneous equation
\begin{equation}
G(r)=(r+Cr^{1+\alpha})G'(r)+CN(r)r^2. \label{equation1}
\end{equation}
For this purpose, recall (see Lemma \ref{gronwall}) that the solutions of the homogeneous first order linear equation 
$$f'(r)=\frac{1}{r+Cr^{1+\alpha}}\;f(r)$$
are given by 
$$f(r)=\lambda \frac{r}{(Cr^\alpha+1)^{1/\alpha}} \;, \quad \lambda \in \R.$$
Then from the method of ``variation of the constant" we deduce that a particular solution for equation \eqref{equation1} is 
$$G(r)=\lambda(r)\frac{r}{(Cr^\alpha+1)^{1/\alpha}}\;,$$
with $\lambda(r)= -C \int_0^r  N(t)(Ct^\alpha+1)^{\frac{1-\alpha}{\alpha}}\,dt$ (notice that $N(t)(Ct^\alpha+1)^{\frac{1-\alpha}{\alpha}}$ is integrable because $N$ is). In particular we have 
\begin{equation}
\lim_{r\to 0}\frac{G(r)}{r}=0. \label{estimationgood2}
\end{equation}
Now let us return to $E(r)$, which is assumed to satisfy \eqref{lastineqdiff}. If we subtract $G(r)$ in the equation \eqref{lastineqdiff} we get 
$$H(r)\leq (r+Cr^{1+\alpha})H'(r) \; ,$$
where $H(r)=E(r)-G(r)$. Therefore we can apply Lemma \ref{gronwall} to $H$ which gives the existence of the limit 
$$\lim_{r \to 0}\left(\frac{H(r)}{r}\right) <+\infty,$$
and we conclude using \eqref{estimationgood2}. 
\end{proof}

\section{Blow up}
\label{blowup}
Here we prove the second part of Theorem \ref{the} concerning the blow up sequence. Before going on with blow up limits at the origin, we start with a rigorous definition of $u(0)$. Indeed,  let $u$ be a solution for the problem \eqref{problem1} with $g \in L^\infty$, $(\lambda=f=0)$ or $(f \in L^{\infty}$ and $\lambda>0)$.  We suppose that $K$ is a closed and connected set satisfying the density condition \eqref{density_condition} at $0$. Let $R_r$ the family of rotations given by remark \ref{rotations} so that $r^{-1}R_r(K\cap B(0,r))$ converges to the segment $[-1,0]\times \{0\}$ when $r$ goes to $0$. For any $r$ small enough we define $A_r:= R_r^{-1}(B((r/2,0),r/4))$ and 
$$m_r:=\frac{1}{|A_r|}\int_{A_r}u(x) dx.$$

\begin{lemma}[Definition of $u(0)$]\label{defu0} The sequence  $m_r$ converges to some finite number that we will denote by $u(0)$. 
\end{lemma}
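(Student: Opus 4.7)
My plan is to show that $(m_r)_{r>0}$ is a Cauchy net as $r\to 0^+$ by proving that $|m_r-m_{r/2}|=O(\sqrt{r})$ and then summing over a geometric sequence of scales. The comparison between $m_r$ and $m_{r/2}$ will come from a Poincar\'e inequality on a \emph{fixed} subregion of $B(0,1)\setminus K_r$ (where $K_r:=r^{-1}(K\cap B(0,r))$), once two pieces of geometry are under control.

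\textbf{Two preliminary geometric facts.} The first proposition of Section~\ref{geom} says that, in the rotated frame, $r^{-1}R_r(K\cap B(0,r))$ converges in Hausdorff distance to the segment $[-1,0]\times\{0\}$. From this I deduce: \emph{(a)} the set $A_r=R_r^{-1}(B((r/2,0),r/4))$ is at distance $\geq r/8$ from $K$ for $r$ small, since in the rotated frame $B((r/2,0),r/4)\subset\{y_1\geq r/4\}$ stays uniformly away from the limit slit; \emph{(b)} the rotations $R_r$ and $R_{r/2}$ are close, with angle $o(1)$. Indeed $x_{r/2}\in K\cap\partial B(0,r/2)\subset K\cap B(0,r)$ sits within $o(r)$ of the segment $[x_r,0]$, and since $|x_{r/2}|=r/2$ the nearest point on $[x_r,0]$ must be $\tfrac12 x_r+o(r)$; so the unit vectors $x_r/|x_r|$ and $x_{r/2}/|x_{r/2}|$ differ by $o(1)$, hence so do $R_r$ and $R_{r/2}$.

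\textbf{Poincar\'e estimate in the rescaled picture.} Let $v_r(y):=u(ry)$ on $B(0,1)$, so that $\int_{B(0,1)\setminus K_r}\|\nabla v_r\|^2=E(r)\leq Cr$ by the monotonicity Propositions~\ref{prop1bisA} and~\ref{prop11A}. Writing $D_r:=R_r^{-1}(B((1/2,0),1/4))$ and $D'_{r/2}:=R_{r/2}^{-1}(B((1/4,0),1/8))$, one has
\[
m_r=\frac{1}{|D_r|}\int_{D_r}v_r,\qquad m_{r/2}=\frac{1}{|D'_{r/2}|}\int_{D'_{r/2}}v_r .
\]
After applying $R_r$ in the $y$-variable, $R_r(D_r)=B((1/2,0),1/4)$ and $R_r(D'_{r/2})$ is a ball of radius $1/8$ whose center tends to $(1/4,0)$ by~(b). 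Thus for $r$ small both balls lie inside the fixed half-disk $H:=B(0,1)\cap\{y_1>1/16\}$, while by~(a) the set $R_r(K_r)$ is contained in an arbitrarily thin neighborhood of $[-1,0]\times\{0\}$ and is therefore disjoint from $H$. The Poincar\'e inequality on the fixed smooth domain $H$ (with constant $C_H$ independent of $r$), together with the standard estimate $|\mathrm{avg}_{D}w-\mathrm{avg}_{H}w|\leq|D|^{-1/2}\|w-\mathrm{avg}_Hw\|_{L^2(H)}$ applied to $D=R_r(D_r)$ and $D=R_r(D'_{r/2})$, yields
\[
|m_r-m_{r/2}|\ \leq\ C\,\|\nabla v_r\|_{L^2(B(0,1)\setminus K_r)}\ \leq\ C\sqrt{E(r)}\ \leq\ C\sqrt{r}.
\]

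\textbf{Conclusion and main obstacle.} With $r_n:=2^{-n}$, the series $\sum_n|m_{r_n}-m_{r_{n+1}}|\leq C\sum_n 2^{-n/2}$ converges, so $(m_{r_n})$ is Cauchy in $\R$; the same Poincar\'e comparison applied to any pair $r'\in[r/2,r]$ extends this to a Cauchy net and produces the finite limit $u(0)$. The main delicate point is step~(b): without quantitative closeness of $R_r$ and $R_{r/2}$, the balls $D_r$ and $D'_{r/2}$ could a priori sit on opposite sides of the crack $K_r$, in which case no fixed subregion of bounded Poincar\'e constant would contain both of them inside $B(0,1)\setminus K_r$. It is precisely the density hypothesis~\eqref{density_condition}, propagated from scale $r$ to scale $r/2$ via the preliminary proposition, that rules this out.
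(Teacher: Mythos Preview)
Your proof is correct and follows essentially the same strategy as the paper: compare $m_{r}$ and $m_{r/2}$ via Poincar\'e on a region away from $K$ that contains both averaging sets, bound the oscillation by $\sqrt{E(r)}\le C\sqrt{r}$ from Section~\ref{Bonnet}, and sum over a dyadic sequence. The only cosmetic differences are that the paper works at the original scale using a single ball $B_n=R_{r_n}^{-1}(B((r_n/2,0),3r_n/8))$ containing both $A_{r_n}$ and $A_{r_{n+1}}$, whereas you rescale to $B(0,1)$ and use a fixed half-disk; and you are more explicit than the paper about why $R_r$ and $R_{r/2}$ are $o(1)$-close, a point the paper leaves implicit when asserting $A_{r_{n+1}}\subset B_n$.
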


\begin{proof} We begin with a discrete sequence $r_n:=2^{-n}r_0$ for some $r_0$ small, $n \in \N$. In particular we assume $r_0$ small enough to have
\begin{equation}
\frac{1}{r_n}\int_{B(0,r_n)}\|\nabla u\|^2 dx \leq C \quad \forall n\in \N, \label{r0petit}
\end{equation}
for some constant $C$ that surely exists thank to Section \ref{Bonnet}.
 Since $r^{-1}R_r(K\cap B(0,r))$ converges to the segment $[-1,0]\times \{0\}$, we are sure that for $r_0$ small enough and for every $n$, the ball $B_n:=R_{r_n}^{-1}(B(r_n/2,0),3r_n/8)$ does not meet $K$ and contains both $A_{r_n}$ and $A_{r_{n+1}}$. We denote by $m_n$ the average of $u$ on $B_n$. Applying Poincar\'e inequality in $B_n$ yields
$$
|m_{r_n}-m_n| =   \left| \frac{1}{|A_{r_n}|}\int_{A_{r_n}} (u -m_n)dx \right| \leq  \frac{1}{|A_{r_n}|}  \int_{B_n}|u-m_n| \leq C \frac{1}{r_n}\int_{B_n}\|\nabla u\| dx
$$
and the same for $m_{r_{n+1}}$ so that at the end
\begin{equation}
|m_{r_n}-m_{r_{n+1}}|\leq C   \frac{1}{r_n}\int_{B_n}\|\nabla u\| dx
\leq C \left(\int_{B_n}\|\nabla u\|^2 dx\right)^{\frac{1}{2}} \leq C r_n^{1/2}  \label{cauchy}
\end{equation}
because of \eqref{r0petit}. In particular this implies that $m_{r_n}$ is a Cauchy sequence, thus converges to some limit $\ell \in\R$.
Now if $r_k$ is any other sequence converging to zero, we claim that the limit of $m_{r_k}$ is still equal to $\ell$. To see this it suffice to find a subsequence $r_{n_k}$ of $r_n$ such that $r_{n_k}/2 \leq r_k \leq r_{n_k}$ and compare $m_{r_k}$ with $m_{r_{n_k}}$ by the same way as we obtained \eqref{cauchy} and conclude that they must have same limit. 
 \end{proof}

\begin{remark}\label{moyennesbis}  In the future it will be convenient to introduce another type of averages on circles, namely
$$\tilde{m}_r:=\frac{1}{\tilde{A}_r}\int_{\tilde{A}_r}u \;d\Hh,$$
with 
$$\tilde{A}_r:= B((r,0),\frac{r}{4})\cap \partial B(0,r)$$
It is easily checked that the sequence of $\tilde{m}_r$ are aslo converging to $u(0)$, i.e. has same limit as $m_r$.
\end{remark}

We are now ready to prove the last part of Theorem \ref{the}. 
\begin{theorem}[Convergence of the blow-up sequence]\label{prop2} Let $u$ be a solution for the problem \eqref{problem1} with $g \in L^\infty$, $(\lambda=f=0)$ or $(f \in L^{\infty}$ and $\lambda>0)$.  We suppose that $K$ is a closed and connected set satisfying the density condition \eqref{density_condition} at the origin. We denote by $u(0)$ the real number given by Lemma  \eqref{defu0}. Let $R_r$ the family of rotations given by Remark \ref{rotations} so that $r^{-1}R_r(K\cap B(0,r))$ converges to some segment $\Sigma_0$ when $r$ goes to $0$. If $(r,\theta)$ are the polar coordinates such that $(\sqrt{A(0)})^{-1}(\Sigma_0)=(\R^-\times\{\pi\})$ we denote by $v_0$ the function defined in polar coordinates by
$$v_0(r,\theta) := \sqrt{\frac{2C_0r}{\pi}} \sin(\theta/2).$$
Then 
$$u_r:=r^{-\frac{1}{2}}(u(rR_r^{-1}x) -u(0)) \underset{r\to 0}{\longrightarrow}  v_0\circ \sqrt{A(0)},$$
where the constant $C_0$ is given by
$$C_0=\lim_{r\to 0}\left(\frac{1}{\det(\sqrt{A(0)})r}\int_{B_A(0,r)\setminus K} \|\nabla u\|_A^2 dx\right),$$
and the convergence holds strongly in $L^2(B(0,1))$ for both $u_r$ and $\nabla u_r$.
\end{theorem}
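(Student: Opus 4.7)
The plan is to work in three stages: reduction and compactness, identification of the limit, and removal of the residual constant together with the upgrade to strong convergence.

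First, I would reduce to $A(0)=Id$ by applying the change of variables of Proposition~\ref{chang}, so that $B_A(0,r)$ is the Euclidean ball. With $K_r := r^{-1}R_r(K\cap B(0,r))$ converging to the segment $\Sigma_0 = [-1,0]\times\{0\}$ in Hausdorff distance by \eqref{wtangent}, rescaling gives
\[
\int_{B(0,1)\setminus K_r}|\nabla u_r|^2\,dx \;=\; \frac{1}{r}\int_{B(0,r)\setminus K}\|\nabla u\|_A^2\,dx \;\longrightarrow\; C_0
\]
by Proposition~\ref{prop1bisA} (or \ref{prop11A} when $f\neq 0$). The mean of $u_r$ on the fixed set $A_1 = B((1/2,0),1/4)$ equals $r^{-1/2}(m_r-u(0))$, which is bounded thanks to the Cauchy-sum estimate $|m_r-u(0)|=O(r^{1/2})$ contained in the proof of Lemma~\ref{defu0}. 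A Poincar\'e inequality with constant uniform in $r$ on $B(0,1)\setminus K_r$ (valid since $K_r$ is Hausdorff close to the slit $\Sigma_0$, which does not disconnect the disk) then bounds $u_r$ in $H^1$, and Rellich produces a subsequence with $u_{r_n}\rightharpoonup u_*$ weakly in $H^1_{\mathrm{loc}}(B(0,1)\setminus\Sigma_0)$ and strongly in $L^2(B(0,1))$.

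Next I would identify $u_*$. For any $\varphi\in C^\infty_c(B(0,1)\setminus\Sigma_0)$, the Hausdorff convergence $K_r\to\Sigma_0$ places $K_r$ outside $\mathrm{supp}(\varphi)$ for $r$ small, so $\varphi$ is admissible in the weak form of the rescaled equation for $u_r$; the rescaled inhomogeneity is of order $r^{1/2}$ in $L^2$ and vanishes in the limit, so $u_*$ is harmonic on the slit disk with Neumann condition on $\Sigma_0$. Separating variables in polar coordinates aligned with $\Sigma_0$ yields
\[
u_*(\rho,\theta) \;=\; \sum_{n\ge 0} a_n\,\rho^n\cos(n\theta) \;+\; \sum_{\substack{k\ge 1\\ k\text{ odd}}} b_k\,\rho^{k/2}\sin(k\theta/2).
\]
Applying the monotonicity of Section~\ref{Bonnet} at every intermediate scale $\rho\in(0,1]$ shows $\int_{B(0,\rho)\setminus K_r}|\nabla u_r|^2\to C_0\rho$, and this norm convergence together with weak $L^2$ convergence upgrades to strong $L^2$ convergence of $\nabla u_r$ on each $B(0,\rho)$ and yields $\int_{B(0,\rho)}|\nabla u_*|^2 = C_0\rho$. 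Since the $n$-th cosine mode contributes $\pi n a_n^2\rho^{2n}$ and the $k$-th sine mode contributes $(\pi k/2)b_k^2\rho^k$, the purely linear growth in $\rho$ forces all modes to vanish except $b_1$, with $b_1 = \sqrt{2C_0/\pi}$. Hence $u_* = a_0 + v_0$ for some constant $a_0$.

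The main obstacle is to kill the constant $a_0$ and to promote convergence from subsequences to the full family. I would use the pointwise scaling identity
\[
u_{r/2}(x) \;=\; \sqrt{2}\,u_r\bigl(\tfrac{1}{2}(R_r R_{r/2}^{-1})x\bigr),
\]
noting that the density-$1/2$ hypothesis applied via \eqref{wtangent} at both scales $r$ and $r/2$ forces $R_r R_{r/2}^{-1}\to Id$ (the shorter segment $[0,x_{r/2}]$ of length $r/2$ must lie within $o(r)$ of $[0,x_r]$, constraining the angular difference between $x_{r/2}$ and $x_r$ to be $o(1)$). Passing to the limit along a sequence $r_n$ chosen so that $u_{r_n}\to u_* = a_0+v_0$ and $u_{r_n/2}\to\tilde u_* = \tilde a_0 + v_0$, one obtains $\tilde a_0 = \sqrt{2}\,a_0$. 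Iterating this rescaling along a diagonal subsequence $r_{n_k}/2^k$ produces subsequential limits with constants $2^{k/2}a_0$, which must remain bounded by the uniform $L^2$ bound on $u_r$; this forces $a_0 = 0$. The uniqueness of the limit $v_0$ then promotes convergence to the whole family $u_r\to v_0$, strongly in $L^2(B(0,1))$; the $L^2$-norm convergence of $\nabla u_r$ established in stage two upgrades the weak convergence of the gradients to strong; finally, undoing the initial change of variables recovers the statement for $v_0\circ\sqrt{A(0)}$.
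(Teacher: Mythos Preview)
Your outline has a genuine gap at the identification step. When you write that ``norm convergence together with weak $L^2$ convergence upgrades to strong $L^2$ convergence of $\nabla u_r$ on each $B(0,\rho)$ and yields $\int_{B(0,\rho)}|\nabla u_*|^2 = C_0\rho$'', the reasoning is circular. Monotonicity gives $\int_{B(0,\rho)}|\nabla u_r|^2\to C_0\rho$; combined with weak convergence this yields only the lower-semicontinuity inequality $\int_{B(0,\rho)}|\nabla u_*|^2 \le C_0\rho$. The reverse inequality --- that no energy concentrates on the limit crack $\Sigma_0$ --- is precisely the difficult point, and without it you get neither strong convergence nor the equality $\int_{B_\rho}|\nabla u_*|^2=C_0\rho$ on which your mode-killing argument rests.

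A second, intertwined gap: testing with $\varphi\in C^\infty_c(B(0,1)\setminus\Sigma_0)$ only shows that $u_*$ is harmonic \emph{away from} $\Sigma_0$; it does not give the Neumann condition. Even testing with $\varphi\in C^\infty_c(B(0,1))$ would only force the normal derivatives of $u_*$ from the two sides of $\Sigma_0$ to agree, not each to vanish. Without Neumann your expansion is unjustified: the degree-$1/2$ harmonic functions in the slit disk with energy $C_0\rho$ on $B_\rho$ are $\sqrt{r}\bigl(a\cos(\theta/2)+b\sin(\theta/2)\bigr)$ with $a^2+b^2=2C_0/\pi$, and the $\cos$ part is the Dirichlet, not the Neumann, mode. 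To rule it out you must test with functions in $H^1(B\setminus\Sigma_0)$ that jump across $\Sigma_0$, but those are not admissible for $u_r$ because $K_r\neq\Sigma_0$.

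The paper closes both gaps at once by a competitor argument. Given $v\in H^1(B\setminus\Sigma_0)$ equal to $u_0$ near $\partial B$, it manufactures $v_r\in H^1(B\setminus K_r)$ by reflecting $v$ across the horizontal axis in the two components of $\{x_1<0\}\setminus K_r$, then joins $v_r$ to a truncation $u_r^M:=(-M)\vee(u_r\wedge M)$ across a carefully chosen circle $\partial B_s$ (on which $N_r(s)=1$ and $\int_{\partial B_s}|\nabla u_r|^2$ is bounded) via a cutoff $\psi_\varepsilon$. Comparing energies, sending $r\to 0$, then $M\to\infty$, $\varepsilon\to 0$, $\eta\to 0$, yields $\int_{B}|\nabla u_0|^2\le\int_B|\nabla v|^2$; minimality encodes Neumann, and the choice $v=u_0$ gives $\limsup_r\int|\nabla u_r|^2\le\int|\nabla u_0|^2$, hence strong convergence. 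Your scaling argument for $a_0=0$ is a pleasant addition --- the paper is terse on this point --- but it only becomes relevant once the identification of $u_*$ up to an additive constant has been secured by the construction above.
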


\begin{proof} We know that $K_r:=\frac{1}{r}R_r(K)$ converges to the half-line $\mathbb{R}^- \times \{0\}$ locally in $\R^2$ for the Hausdorff distance. To simplify the notations and  without loss of generality, in the sequel we will identify  $u$ with $u\circ R_r^{-1}$, and $K$ with $R_r(K)$ so that we can assume that $R_r=Id$ for all $r$. We can also assume that $u(0)=0$ and as before, it is enough to consider the case when $A(0)=Id$ because the general case follows using the change of variable of Proposition \ref{chang}. 

As in the proof of Lemma \ref{defu0}, for any $r$ we denote by $m_r$ the average of $u$ on the ball $B((r/2,0),r/4)$. Then we consider the function $u_r(x):=r^{-\frac{1}{2}}(u(rx)-m_r)$ defined in $\frac{1}{r}(\Omega\backslash K)$. The domain $\frac{1}{r}(\Omega\backslash K)$ converges to $\R^2\setminus K_0$ with $K_0:=\R^{-}\times \{0\}$.

We will prove that $u_r$ converges, in some sense that will be given later, to function in $\R^2 \setminus K_0$ that satisfies a certain Neumann problem. In the sequel we will work up to subsequences, but this will not be restrictive in the end by uniqueness of the limit.

The starting point is  that $\nabla u_r$ is uniformly bounded in $L^2(B(0,2))$ (we start working in $B(0,2)$  for security but the real interesting ball will be $B(0,1)$). Indeed, $\nabla u_r(x)=\sqrt{r}\nabla u(rx)$, 
$$\int_{B(0,2)\setminus K_r}\|\nabla u_r\|^2 dx=\int_{B(0,2)\setminus K_r}r\|\nabla u(rx)\|^2dx = \frac{1}{r}\int_{B(0,2r)\setminus K}\|\nabla u(x)\|^2dx.$$
{}From Proposition \ref{prop1}, we know that $\frac{1}{r}\int_{B(0,r)}\|\nabla u(x) \|_A^2dx$ converges to $C_0$ and we deduce (using the coerciveness of $A$), that $\nabla u_r$ is uniformly bounded in $L^2(B(0,2))$.

Therefore we can extract a subsequence such that $\nabla u_r$ converges to some $h$, weakly in $L^2(B(0,2))$, and
\begin{equation}\label{semicontinue}
\int_{B(0,1)}\|h \|^2 \leq \liminf_{r\to 0} \int_{B(0,1)}\|\nabla u_r\|^2 dx \leq C.
\end{equation}
Next we want to prove that in compact sets of $B(0,2)\setminus K_0$, the convergence is much better. For this purpose we introduce for any $a>0$ 
$$U(a):=\{x \in B(0,2); d(x,K_0)>a  \}.$$
The sequence  $u_r$ is uniformly bounded in $H^1(U(a))$ for any $a$. Therefore taking a sequence $a_n\to 0$, extracting some subsequence of $u_r$ and using a diagonal argument we can find a subsequence of $u_r$, not relabeled, that converges weakly in $H^1$ and strongly in $L^2$ in any of the domains $U(a)$. In other words, this subsequence $u_r$ converges weakly in $H^1_{loc}(B(0,2)\setminus K_0)$ and strongly in $L^2_{loc}(B(0,2),\setminus K_0)$ to some function $u_0 \in H^1_{loc}(B(0,2)\setminus K_0)$.  By uniqueness of the limit we must have that $\nabla u_0=h$ a.e. in $B(0,2)$ and therefore \eqref{semicontinue} reads
\begin{equation}
\int_{B(0,1)}\|\nabla u_0\|^2 \leq \liminf_{r\to 0} \int_{B(0,1)}\|\nabla u_r\|^2 dx \leq C.  \label{semicontinue2}
\end{equation}

Now we want to prove that $u_0$ is a minimizer for the Dirichlet energy, and at the same time prove that the convergence hold strongly in $L^2(B(0,1))$ both for $u_r$ and $\nabla u_r$. To do this we consider any function $v\in H^1(B(0,1)\setminus K_0)$ with
$v\equiv u_0$ in $B(0,1)\setminus B(0,1-\delta)$ and $v\equiv 0$ in $B(0,\eta)$, for some small $\delta>0$. The family of all such functions $v$ is dense in the space of functions of $H^1(B(0,1)\setminus K_0)$ with trace equal to $u_0$ on $\partial B(0,1)\setminus K_0$ and therefore to prove that $u_0$ is a minimizer, it is enough to prove that 
$$\int_{B(0,1)}\|\nabla u_0\|^2 dx \leq \int_{B(0,1)}\|\nabla v\|^2 dx$$
for all such functions $v$. 

We denote by $N_r(s)$ the number of points of $K_r\cap \partial B(0,s)$. As already used before, since by assumption $\Hh(K_r\cap B(0,1))$ converges to $1$ and 
$$1\leq \int_{0}^1N_r(s)ds \leq \Hh(K_r\cap B(0,1)),$$
we can extract a subsequence  such that  $N_{r}(s) \to 1$ a.e.
Then Fatou's lemma yields
\begin{equation}
\int_0^1 \liminf_r \int_{\partial B_s} \|\nabla u_{r}\|^2 d\Hh\,ds
\ \le\
\liminf_r \int_0^1 \int_{\partial B_s} \|\nabla u_{r}\|^2 d\Hh\,ds
= C_1, 
\label{fatou}
\end{equation}
where $C_1$ is closely related to $C_0$.  This will allows us later to find a good radius $s$ for which both $N(s)=1$ and $\int_{\partial B_s}\|\nabla u_r\|^2 d\Hh$ is uniformly bounded.

At this stage we only know that $\nabla u_r$ converges weakly in $L^2$ to $\nabla u_0$. On the other hand, up to a further subsequence, we can find a measure $\mu$ such that $ |\nabla u_r|^2 dx $ weakly-$\star$ converges  to $\mu$.
Let $x\in B(0,2)$, $\rho>0$ such that $\overline{B(x,\rho)}\subset B(0,2)\setminus K_0$.
Let $\psi$ be a smooth cutoff, with support in $B(x,\rho)$, and equal to $1$ in $B(x,\rho/2)$.
Then we can write that
\begin{equation}\label{weakbrho}
\int_{B(x,\rho)} (A_r\nabla u_r)\cdot\nabla (\psi (u_r-u_0)) + r^2\lambda u_r(u_r-u_0)\psi - r^{3/2} f_r (u_r-u_0)\psi
\ =\ 0
\end{equation}
where $A_r(x)=A(rx)$, $f_r(x)=f(rx)-\lambda m_r$, and (taking the limit in the ``first''
$u_r$ while freezing the test function $(u_r-u_0)\psi$, and using the weak convergence
in $H^1(B(x,\rho)$ of $u_r$ to $u_0$):
\begin{equation}\label{weakbrho0}
\int_{B(x,\rho)} (\nabla u_0)\cdot\nabla (\psi (u_r-u_0)) \ =\ 0\,.
\end{equation}
Taking the difference of \eqref{weakbrho} and \eqref{weakbrho0},
and using the fact that $u_r\to u_0$ strongly in $L^2(B_r)$,
$\nabla u_r$ is uniformly bounded in $L^2(B_r)^2$, and $A_r\to Id$ uniformly, we obtain that
\[
\lim_{r\to 0} \int_{B(x,\rho/2)} \|\nabla u_r-\nabla u_0\|^2\,dx\ =\ 0
\]
so that clearly, $\mu\restr (B(0,2)\setminus K_0) = \|\nabla u_0\|^2\,dx$: if $\mu$ has
a singular part it must be concentrated on $K_0$.
 Moreover, we have $\mu(\{(-s,0)\})=0$
for all $s\in [0,2)$ but a countable number. (Observe that using any other test function
in \eqref{weakbrho} and passing to the limit, we easily deduce that $u_0$ is harmonic
in $B(0,2)\setminus K_0$, but this will also be a consequence of the minimality of the
Dirichlet energy which will soon be shown).

Now from \eqref{fatou} we may choose $s$,  $1-\delta<s<1$, so that
$\mu (\{-s,0\}) = 0$,
$N_{r}(s)=1$ for all $r$ large enough,  and
\[
\liminf_r \int_{\partial B_s} \|\nabla u_{r}\|^2 d\Hh \ <\ +\infty
\]
In particular, upon extracting a further subsequence, we may assume that
\[
\sup_r \int_{\partial B_s} \|\nabla u_{r}\|^2 d\Hh \ <\ +\infty.
\]
Then, by Sobolev's embedding, and using the fact that the averages
$\tilde{m}_r$ are uniformly bounded (see Remark \eqref{moyennesbis}), we deduce that there exists 
$C>0$ such that
\[
\|u_{r}\|_{L^\infty(\partial B_s)} \ \le\ C.
\]
We now consider any constant $M>C$  and define 
\[
u^M_{r} \ =\ (-M \vee (u_{r} \wedge M))
\]
we  have that $u^M_{r}\to u_0^M$ in
        $L^2_{loc}(B(0,1)\setminus K_0)$,  where $u_0^M$ is naturally defined as being $u_0^M:=(-M \vee (u_{0} \wedge M))$. Up to a subsequence the convergence holds almost everywhere. 
        But now, since the functions are uniformly bounded, it converges also strongly in
$L^2(B(0,1)\setminus K_0)$.  

Now, from the original function $v \in H^1(B(0,1)\setminus K_0)$, we want to construct a function $v_r \in H^1(B(0,1)\setminus K_r)$ not much different from $v$. We denote by $C^\pm_r$ the connected components of $(B(0,1)\setminus K_r)\cap \{ x\leq 0\}$ containing $(-1/2,\pm 1/2)$ and we define $v_r(x,y)$ as follows. In $B(0,1)\cap \{x>0\}$ we set $v_r(x,y)=v(x,y)$.
$$
\begin{array}{cc}
\text{ In } C^+_r, & v_r(x,y)=
\left\{
\begin{array}{cc}
v(x,y) & \text{ if } y\geq 0 \\
v(x,-y) & \text{ otherwise. }
\end{array}
\right. \\
\text{ In } C^-_r, & v_r(x,y)=
\left\{
\begin{array}{cc}
v(x,y) & \text{ if } y\leq 0 \\
v(x,-y) & \text{ otherwise. }
\end{array}
\right. \\
\end{array}
$$
And finally $v_r=0$ everywhere else (i.e. in $B(0,1)\cap \{x\leq 0\} \setminus (C_n^+\cap C_n^-)$). Then it is easy to see that $v_r \in H^1(B(0,1)\setminus K_r)$, converges strongly to $v$ in $L^2$ and ${\bf 1}_{B(0,1)\setminus K_r}\nabla v_r$ converges strongly to ${\bf 1}_{B(0,1)\setminus K_0}\nabla v$ in $L^2(B(0,1))$. However, by this procedure the trace on $\partial B(0,1)$ is not necessarily preserved.  

To get rid of that  we let $\varepsilon<s-(1-\delta)$, we pick a smooth cut-off $\psi_\e$ with compact support in $B_s$, 
$0\le \psi_\e\le 1$ and $\psi_\e\equiv 1$ in $B_{s-\e}$, and
we let
\[
v_r^\e \ =\ \psi_\e v_r + (1-\psi_\e) u^M_{r}
\]
which converges strongly in $L^2$ to $\psi_\e v + (1-\psi_\e) u_0^M$ as $r\to 0$. 
Next we  write, since $v_r^\e = u^M_{r} = u_{r}$ on $\partial B_s$ and $u_r$ is a minimizer,
\[
\int_{B_s} (A_r\nabla u_{r})\cdot \nabla u_{r}
+ \lambda r^2 u_{r}^2 - 2r^{3/2} f_r u_{r}\,dx
\le
  \int_{B_s} (A_r\nabla v_r^\e)\cdot \nabla v_r^\e
+ \lambda r^2 (v_r^\e)^2 - 2r^{3/2} f_r v_r^\e\,dx\,.
\]

Recall that $|f_r|\leq C$ and  $|u_{r}|\le C/\sqrt{r}$ (by definition) so that
$2r^{3/2} f_r u_{r}= o(r)$, and we also easily check that
\[
\int_{B_s} \lambda r^2 (v_r^\e)^2 - 2r^{3/2} f_r v_r^\e\,dx 
\ =\ o (r)
\]
hence we focus on the other terms: we write for $\delta>0$ small,
\begin{multline*}
\int_{B_s} (A_r\nabla u_{r})\cdot \nabla u_{r} \,dx 
\\ \le\ 
(1+\eta)\int_{B_s} \psi_\e^2(A_r\nabla v_r)\cdot \nabla v_r\,dx
\ + C'/\eta \int_{B_s}\|\nabla\psi_\e\|^2|v_r-u^M_{r}|^2\,dx
\\ + \ C'/\eta \int_{B_s} (1-\psi_\e)^2\|\nabla u_{r}\|^2\,dx
+ o(r)
\end{multline*}

Then sending $r\to 0$ we obtain
\begin{multline*}
\int_{B_s} \|\nabla u_0\|^2\ \le\
(1+\eta)\int_{B_s} \psi_\e^2\|\nabla v\|^2 \,dx
\\
+\ C'/\eta \int_{B_s}\|\nabla\psi_\e\|^2|v-u^M_0|^2\,dx
+ C'/\eta \mu(B_\e(-s,0))
\end{multline*}
but on the support of $\nabla \psi$, $v-u^M_0$ is equal to $(1-\psi_\e)(u_0^M-u_0)$. Therefore letting $M$ tend to $+\infty$ we get 
\[
\int_{B_s} \|\nabla u_0\|^2\ \le\
(1+\eta)\int_{B_s} \psi_\e^2\|\nabla v\|^2 \,dx + 
C'/\eta \mu(B_\e(-s,0))
\]
finaly letting $\e\to 0$, then  $\eta \to 0$, and adding the integral over $B(0,1)\setminus B(0,s)$ on both sides (where $v$ and $u_0$ actually coincide) we get the desired inequality, namely
\[
\int_{B(0,1)} \|\nabla u_0\|^2 dx\ \le\
\int_{B(0,1)} \|\nabla v\|^2 \,dx 
\]
which proves that $u_0$ is a minimizer. Moreover taking the particular choice $v=u_0$ in the same argument as before would give 
$$\limsup_{r\to 0} \int_{B(0,1)}\|\nabla u_r\|^2dx \ \le \  \int_{B(0,1)} \|\nabla u_0\|^2dx$$
and this toghether with \eqref{semicontinue2}, implies the convergence of norms, which by the weak convergence yields the strong convergence in $L^2$ for the gradients, as desired.

Finally all that we did in $B(0,1)$ could be done in any $B(0,R)$ for $R$ as large as we want, which gives a definition of $u_0$ in $\R^2\setminus K_0$. Moreover $u_0$ is of constant normalized energy. In other words we claim that 
$s\mapsto\frac{1}{s}\int_{B(0,s)}\|\nabla u_0\|^2$
is constant in $s$, identically equal to $C_0$. Indeed, by the strong convergence in $L^2$ of $\nabla u_r$,  the value of $\frac{1}{s}\int_{B(0,s)}\|\nabla u_0\|^2$ is given by 
$$\lim_{r\to 0} \frac{1}{s}\int_{B(0,s)}\|\nabla u_r\|^2,$$
which we actually claim to be equal to $C_0$:  a change of variable gives
\begin{eqnarray}
\int_{B(0,s)}\|\nabla u_r\|^2 &= & \frac{1}{r}\int_{B(0,rs)}\|\nabla u\|^2\notag \\
&=&  \frac{1}{r}\int_{B(0,rs)}\|\nabla u \|_A^2 + \frac{1}{r} \int_{B(0,rs)}\langle (Id-A)\nabla u , \nabla u \rangle. \label{eq12}
\end{eqnarray}
The first term in \eqref{eq12} converges to $sC_0$ and the second term converges to zero because less than   $\|Id-A\|_{L^\infty(B(0,r))}$ times something bounded.

The latter implies that $u_0$ is the cracktip function. More precisely, we claim now that
\begin{equation}
u_0=\sqrt{\frac{2C_0r}{\pi}}\sin(\theta/2). \label{crak}
\end{equation}
We shall give two different arguments for \eqref{crak}. The first one is very nice and due to Bonnet: returning to the proof of the monotonicity Lemma applied to $u_0$, which says that $s\mapsto\frac{1}{s}\int_{B(0,s)}\|\nabla u_0\|^2$ must be increasing (Proposition \ref{prop1}), since $s\mapsto\frac{1}{s}\int_{B(0,s)}\|\nabla u_0\|^2$ is actually constant in $s$, all the inequalities in the proof are equalities. In particular $u_0$ must be the optimal function in Wirtinger inequality, thus it is the famous $C\sqrt{r}\sin(\theta/2)$ function.

The second argument is to decompose $u_0$ in spherical harmonics, i.e. as a sum of homogeneous harmonic functions in the complement of the half line $K_0$, which Neumann boundary conditions on $K_0$. Now using that  $s\mapsto\frac{1}{s}\int_{B(0,s)}\|\nabla u_0\|^2$ is constant we can kill all the terms of degree different from $1/2$ by taking blow-up and blow-in limits. This implies that $u_0$ must be homogeneous of degree $1/2$, and from this information it is not difficult to deduce \eqref{crak} by looking at $u_0$ on the unit circle.

 Then, the exact constant $C:=\sqrt{\frac{2C_0}{\pi}}$ in front of the sinus can be easily computed by hand with the formulas
$$\frac{\partial u_0}{\partial \tau}=\frac{1}{r}\frac{\partial u_0}{\partial \theta}=C\frac{1}{2\sqrt{r}}\cos(\theta/2)\quad \quad \text{ and }\quad \quad \frac{\partial u_0}{\partial r} = C\frac{1}{2\sqrt{r}}\sin(\theta/2).$$
It comes
$$RC_0=\int_{B(0,R)}\|\nabla u_0\|^2=\int_{B(0,R)}\Big|\frac{\partial u_0}{\partial \tau}\Big|^2+\Big|\frac{\partial u_0}{\partial r}\Big|^2=\int_{0}^{R}\int_{-\pi}^\pi \frac{C^2}{4}drd\theta=C^2R\frac{\pi}{2}$$
thus $C=\sqrt{\frac{2C_0}{\pi}}$.

Finally, originally $u_r$ was converging to $\sqrt{\frac{2C_0}{\pi}}\sin(\theta/2)$ up to subsequences, but by uniqueness of the limit we conclude that the whole sequence converges to this function and this achieves the proof.
\end{proof}


\bibliographystyle{plain}
\bibliography{biblio}

\begin{thebibliography}{10}

\bibitem{bg}
Jean-Fran{\c{c}}ois Babadjian and Alessandro Giacomini.
\newblock Existence of strong solutions for quasi-static evolution in brittle
  fracture.
\newblock {\em preprint}, 2012.

\bibitem{b}
Alexis Bonnet.
\newblock On the regularity of the edge set of {M}umford-{S}hah minimizers.
\newblock In {\em Variational methods for discontinuous structures ({C}omo,
  1994)}, volume~25 of {\em Progr. Nonlinear Differential Equations Appl.},
  pages 93--103. Birkh\"auser, Basel, 1996.

\bibitem{cfm1}
A.~Chambolle, G.~A. Francfort, and J.-J. Marigo.
\newblock When and how do cracks propagate?
\newblock {\em J. Mech. Phys. Solids}, 57(9):1614--1622, 2009.

\bibitem{cdens}
Antonin Chambolle.
\newblock A density result in two-dimensional linearized elasticity, and
  applications.
\newblock {\em Arch. Ration. Mech. Anal.}, 167(3):211--233, 2003.

\bibitem{cfm}
Antonin Chambolle, Gilles~A. Francfort, and Jean-Jacques Marigo.
\newblock Revisiting energy release rates in brittle fracture.
\newblock {\em J. Nonlinear Sci.}, 20(4):395--424, 2010.

\bibitem{c}
Antonin Chambolle, Alessandro Giacomini, and Marcello Ponsiglione.
\newblock Crack initiation in brittle materials.
\newblock {\em Arch. Ration. Mech. Anal.}, 188(2):309--349, 2008.

\bibitem{dmft}
Gianni Dal~Maso, Gilles~A. Francfort, and Rodica Toader.
\newblock Quasistatic crack growth in nonlinear elasticity.
\newblock {\em Arch. Ration. Mech. Anal.}, 176(2):165--225, 2005.

\bibitem{dmt}
Gianni Dal~Maso and Rodica Toader.
\newblock A model for the quasi-static growth of brittle fractures: existence
  and approximation results.
\newblock {\em Arch. Ration. Mech. Anal.}, 162(2):101--135, 2002.

\bibitem{d}
Guy David.
\newblock {\em Singular sets of minimizers for the {M}umford-{S}hah
  functional}, volume 233 of {\em Progress in Mathematics}.
\newblock Birkh\"auser Verlag, Basel, 2005.

\bibitem{fm}
G.~A. Francfort and J.-J. Marigo.
\newblock Revisiting brittle fracture as an energy minimization problem.
\newblock {\em J. Mech. Phys. Solids}, 46(8):1319--1342, 1998.

\bibitem{fl}
Gilles~A. Francfort and Christopher~J. Larsen.
\newblock Existence and convergence for quasi-static evolution in brittle
  fracture.
\newblock {\em Comm. Pure Appl. Math.}, 56(10):1465--1500, 2003.

\bibitem{gris}
P.~Grisvard.
\newblock {\em Elliptic problems in nonsmooth domains}, volume~24 of {\em
  Monographs and Studies in Mathematics}.
\newblock Pitman (Advanced Publishing Program), Boston, MA, 1985.

\bibitem{K}
A.~M. Khludnev, V.~A. Kovtunenko, and A.~Tani.
\newblock On the topological derivative due to kink of a crack with
  non-penetration. {A}nti-plane model.
\newblock {\em J. Math. Pures Appl. (9)}, 94(6):571--596, 2010.

\bibitem{lt}
Giuliano Lazzaroni and Rodica Toader.
\newblock Energy release rate and stress intensity factor in antiplane
  elasticity.
\newblock {\em J. Math. Pures Appl. (9)}, 95(6):565--584, 2011.

\bibitem{L1}
Antoine Lemenant.
\newblock Energy improvement for energy minimizing functions in the complement
  of generalized {R}eifenberg-flat sets.
\newblock {\em Ann. Sc. Norm. Super. Pisa Cl. Sci. (5)}, 9(2):351--384, 2010.

\bibitem{MumfordShah}
D.~Mumford and J.~Shah.
\newblock Optimal approximation by piecewise smooth functions and associated
  variational problems.
\newblock {\em Comm. Pure Appl. Math.}, 42:577--685, 1989.

\end{thebibliography}

\end{document}